\documentclass[10pt,a4paper,twoside]{amsart}
\usepackage{hyperref}
\usepackage[english]{babel}
\usepackage{enumerate,colonequals,expdlist}
\usepackage{amssymb,amsmath,amsfonts,amsthm,mathtools}
\usepackage{color,graphicx}
\usepackage{tikz,float}
\usetikzlibrary{patterns}
\renewcommand{\epsilon}{\varepsilon}

\newcommand{\RR}{\mathbb{R}}
\newcommand{\CC}{\mathbb{C}}
\newcommand{\NN}{\mathbb{N}}
\newcommand{\ZZ}{\mathbb{Z}}

\newcommand{\TT}{\mathbb{T}}

\DeclareMathOperator{\supp}{supp}

\newcommand{\Ran}{\operatorname{Ran}}

\newcommand{\sfuc}{\mathrm{uc}}
\newcommand{\abs}[1]{\lvert{#1}\rvert}    % abs value
\newcommand{\normsymb}{\|}
\newcommand{\norm}[2]{\normsymb{#1}\normsymb_{#2}}  % norm

\newcommand{\h}{\widehat} % Fourier transform
\usepackage{scalerel,stackengine}
% wider \widehat
\stackMath
\newcommand\reallywidehat[1]{%
\savestack{\tmpbox}{\stretchto{%
\scaleto{%
\scalerel*[\widthof{\ensuremath{#1}}]{\kern-.6pt\bigwedge\kern-.6pt}%
{\rule[-\textheight/2]{1ex}{\textheight}}%WIDTH-LIMITED BIG WEDGE
}{\textheight}%
}{0.5ex}}%
\stackon[1pt]{#1}{\tmpbox}%
}
\parskip 1ex

\DeclareMathOperator{\dd}{\, d}

\newtheorem{theorem}{Theorem}[section]
\newtheorem{thm}[theorem]{Theorem}
\newtheorem{lemma}[theorem]{Lemma}

\newtheorem*{assumption*}{Assumption}
\newtheorem{proposition}[theorem]{Proposition}

\newtheorem*{corollary*}{Corollary}
\newtheorem{question}[theorem]{Question}

\theoremstyle{definition}
\newtheorem{definition}[theorem]{Definition}
\theoremstyle{remark}
\newtheorem{remark}[theorem]{Remark}
\newtheorem{example}[theorem]{Example}

\begin{document}
\title[Unique continuation \& Logvinenko-Sereda Theorems on the torus]{Scale-free unique continuation estimates and Logvinenko-Sereda Theorems on the torus}
\author[M.~Egidi]{Michela Egidi}
\address[M.~E.]{Ruhr-Universi\"at Bochum, Germany}
\author[I.~Veseli\'c]{Ivan Veseli\'c}
\address[I.~V.]{TU Dortmund, Germany}

\thanks{% \copyright 2020 by the authors. Faithful reproduction of this article is permitted for non-commercial purposes.
{\today, \jobname.tex}. \\
See also Annales Henri Poincar\'e, https://dx.doi.org/10.1007/s00023-020-00957-7} 

\thanks{}

\keywords{
scale free unique continuation property,
equidistribution property,
observability estimate,
quantitative uncertainty principle,
Logvinenko-Sereda Theorems}

\begin{abstract}
We study uncertainty principles for function classes on the torus.
The classes are defined in terms of spectral subspaces of the energy or the momentum, respectively.
In our main theorems, the support of the Fourier transform of the considered functions is allowed to be contained
in (a finite number of) $d$-dimensional cubes.
The estimates we obtain do not depend on the size of the torus and the position of the $d$-dimensional cubes,
but only on their size and number, and the
density and scale of the observability set.
Our results are on the one hand closely related to unique continuation for linear combinations of eigenfunctions (aka spectral inequalities)
which can be obtained by Carleman estimates, on the other hand to observability estimates for the time-dependent Schr\"odinger and for the heat equation,
and finally to the Logvinenko \& Sereda theorem.
In fact, they are based on the  methods developed by Kovrijkine to refine and generalize the results of Logvinenko \& Sereda and Kacnel'son.
Furthermore, relying on completely different techniques associated with the time-dependent Schr\"odinger equation, we prove a companion theorem where the energy of the considered functions is allowed to be in a spectral subspace of a Schr\"odinger operator.
\end{abstract}

\maketitle

\section{Motivation and history}\label{sec:intro}

We study $L^2$-equidistribution properties of functions on a torus which are uniform over a (properly chosen) linear subspace.
Since we are in particular interested in very large tori, we include in our discussion also the case when the domain of the functions is the whole of $\RR^d$.
Here by  $L^2(\Omega )$-equidistribution, for some measurable $\Omega  \subset \RR^d$, we mean that the squared total norm $\int_\Omega  |f|^2$
is controlled by the squared $L^2$-norm $\int_S |f|^2$  over a subset $S \subset \Omega $
provided that $S$ is in a sense (which will be made precise below) evenly distributed within $\Omega $.

The main body of this paper is concerned with the case that the linear subspace consists of functions given by Fourier series
with Fourier coefficients vanishing outside an explicitly prescribed bounded region in momentum space.
The physical interpretation is that the function
describes a quantum state where the momentum vector is localized in the specified set.
A companion result studies the case that the linear subspace
is the range of a spectral projector of a Schr\"odinger operator corresponding to a compact energy interval.
Since a Schr\"odinger operator is the observable operator corresponding to the energy,
this means that in this case the range of values of the energy is localized, in contrast to the specification of the momentum in the first case.

In this section we present several results in the literature that led to questions which motivated this paper.

Let us first consider the case of functions whose Fourier transform is  supported in a bounded region.
For $\Omega =\RR^d$, this question goes back at least to the work of Panejah \cite{Panejah-61, Panejah-62},
which was then independently generalized by Logvinenko \& Sereda \cite{LogvinenkoS-74} and  Kacnel'son \cite{Kacnelson-73} .
In order to spell out their result we need a geometric definition.
\begin{definition}\label{defin:thickness}
Let $d\geq 1$ and let $S$ be a measurable subset of $\RR^d$. We say that $S$ is a \emph{thick set} if there exist
$\gamma\in(0,1]$ and $a=(a_1,\ldots,a_d)\in \RR^d_+$ such that
\begin{equation}\label{def:thick-set}
\forall \ x \in \RR^d: \quad \abs{(S +x)\cap ([0,a_1]\times \ldots \times [0,a_d])}\geq\gamma \prod_{j=1}^d a_j
\end{equation}
Here $\abs{\cdot}$ denotes the Lebesgue measure.
We will refer to $S$ as $(\gamma,a)$-\emph{thick} to emphasise the parameters, and to $(a_1,\ldots,a_d)$
as the \emph{size} of the window  $[0,a_1]\times \ldots \times [0,a_d]$.
\end{definition}
One sees that in particular the complement of a thick set cannot contain arbitrary large balls.

In the following we will adopt the \emph{following convention} to make the dependency of constants explicit:
We call $C$ a numerical constant
if it is a real number independent of any parameter appearing in this paper.
We write $C=C(\phi, \zeta)$ if $C$ is a real constant depending on parameters
$\phi$ and $\zeta$ but independent of any other parameter (appearing in this paper).

The results of \cite{LogvinenkoS-74}, \cite{Kacnelson-73} imply in particular the following
\begin{theorem}[Logvinenko \& Sereda, Kacnel'son ]\label{thm:LS-Kat}
Let $d\in\NN$, $b>0$, $p\in [1,\infty)$, and $S\subset\RR^d$ be a measurable set.
Then the following statements are equivalent:
\begin{itemize}
 \item
 $S$ is a thick set;
 \item
 for all $f\in L^p(\RR^d)$ such that $\supp\widehat{f}\subset B(0,b)$ there exists a constant $C=C(S,b)>0$ such that
\begin{equation}\label{eq:LS-Kat}
 \norm{f}{L^p(\RR^d)}\leq C\norm{f}{L^p(S)}.
\end{equation}
\end{itemize}
\end{theorem}

This result was then sharply quantified by Kovrijkine, who also extended it to functions
with Fourier transform supported in a finite union of $d$-dimensional rectangles.

\begin{definition}\label{def:d-rectangle}
A subset $J \subset \RR^d$ is called \emph{$d$-dimensional rectangle}, if there are
$\lambda_{1}\ldots \lambda_{d}\in \RR$ and $b_1\ldots b_d\in (0,\infty)$ such that
\begin{equation}\label{eq:d-rectangle}
J = [\lambda_{1}- \frac{b_1}{2},\lambda_{1}+ \frac{b_1}{2}] \times\ldots\times
[\lambda_{d}- \frac{b_d}{2},\lambda_{d}+ \frac{b_d}{2}] \quad
\end{equation}
The $b_1,\ldots, b_d$ are called \emph{sidelengths} of $J$.
\end{definition}

\begin{theorem}[Kovrijkine \cite{Kovrijkine-thesis,Kovrijkine-01}]\label{thm:kovrijkine}
 Let $d,n\in\NN$, $p\in[1,\infty]$, and let $S$ be a $(\gamma, a)$-thick set in $\RR^d$. Let also $J$, $J_1,\ldots, J_n$ be
 $d$-dimensional rectangles, each with sidelengths $b_1,\ldots, b_d$. Set $b=(b_1,\ldots,b_d)$.
\begin{itemize}
 \item [(I)] If $f\in L^p(\RR^d)$ such that $\supp\widehat{f}\subset J$, then
\begin{equation} \label{eq:kovrijkine-1}
 \norm{f}{L^p(\RR^d)}\leq \left(\frac{C_1^d}{\gamma}\right)^{C_1(a\cdot b+d)} \norm{f}{L^p(S)},
\end{equation}
where $C_1>0$ is a numerical constant.
\item [(II)] If $f\in L^p(\RR^d)$ such that $\supp\widehat{f}\subset J_1\cup\ldots\cup J_n$, then
\begin{equation}\label{eq:kovrijkine-2}
\norm{f}{L^p(\RR^d)}\leq\left(\frac{C_2^d}{\gamma}\right)^{\left(\frac{C_2^d}{\gamma}\right)^n a\cdot b +n -\frac{p-1}{p}}\norm{f}{L^p(S)},
\end{equation}
where $C_2>0$ a numerical constant.
\end{itemize}
Here $a\cdot b$ denotes the Euclidean inner product in $\RR^d$.
\end{theorem}

Theorem \ref{thm:kovrijkine} lends a precise mathematical formulation to the physical \emph{uncertainty principle} that a quantum state cannot be simultaneously localized in position variable and momentum space.
Note that there are various other quantitative mathematical implementations of this principle, for instance Heisenberg's uncertainty relation,
Hardy's uncertainty principle, the Paley--Wiener theorem, or the framework of annihilating pairs, which we formulate next, restricting ourselves to $p=2$.
Let $\mathcal{E},\mathcal{B}\subset\RR^d$. We say that $(\mathcal{E}, \mathcal{B})$
is a \emph{strong annihilating pair} (see, e.g. \cite[Chapter 3]{HavinJ-94}) if there exists a constant
$C=C(\mathcal{E},\mathcal{B})$ such that
\begin{equation}\label{eq:annihilating-pair}
 \forall\, f\in L^2(\RR^d)  \quad \norm{f}{L^2(\RR^d)}^2\leq C \left( \norm{f}{L^2(\mathcal{E}^c)}^2 +\norm{\widehat{f}}{L^2(\mathcal{B}^c)}^2\right)   .
\end{equation}
Then, Theorem \ref{thm:kovrijkine} above and \cite[p. 88, Subsection 1 A)]{HavinJ-94}
imply that the pair $(S^c, J_1\cup\ldots\cup J_n)$ is strong annihilating, cf.~also the argument in Remark \ref{rmk:annihilating-pair}.
For results about strong annihilating pairs we refer the reader to \cite{GhobberJ-13, HavinJ-94, Jaming-07, Nazarov-94}
and the references therein.

One aim of this paper is to derive, using techniques developed in \cite{Kovrijkine-thesis},  estimates corresponding to \eqref{eq:kovrijkine-1} and \eqref{eq:kovrijkine-2} in the case where $\Omega=\RR^d$ is replaced by
the $d$-dimensional torus $\Omega=\TT_L^d$  with sides of length $2\pi L_1, \ldots, 2\pi L_d$.
The motivation for this goal stems
originally from the objective to establish precise uncertainty relations for spectral projections of Schr\"odinger operators
on  $d$-dimensional cubes (see the subsequent theorem and discussion) as they are studied in particular in the theory of random Schr\"odinger operators, see~e.g.~\cite{CombesHK-03,GerminetK-13,RojasMolinaV-13,NakicTTV-18}
and the references therein.
From this context stems also the interest in deriving estimates which are valid for all (sufficiently large) $L\in \RR_+^d$ and are uniform in $L$.
For this reason, we consider `observability sets' $S$  which are subsets of $\RR^d$, rather than subsets of one fixed  $\TT_L^d$,
and are  $(\gamma, a)$-thick. In order to `see' some part of $S$ in the torus $\TT_L^d$, it is necessary to assume that the size of the torus is larger than the size of the window,
i.e.~$a_1\leq 2\pi L_1, \ldots, a_d\leq 2\pi L_d$.

Now we turn to the second topic of the paper, concerning equidistribution properties of functions belonging to a subspace associated with a compact energy interval.
Specifically, we recall a result proven in \cite{NakicTTV-18,NakicTTV-18-JST}, that applies to the Schr\"odinger operator $ H_\Omega = -\Delta_\Omega + V_\Omega $ on $L^2 (\Omega)$.
Here we assume that $\Omega$ is a generalized ($d$-dimensional) rectangle, i.\,e.
\begin{equation}\label{eq:Omega}
\Omega = \bigtimes_{k =1}^d (\alpha_i , \beta_i) , \quad \alpha_i ,\beta_i \in \RR\cup \{\pm \infty\}, \alpha_i < \beta_i \quad \text{ for all } i=1,\ldots, d.
\end{equation}
Note that in contrast to a usual $d$-dimensional rectangle the sidelengths may be infinite.
We also assume that  $\Omega \supset \Lambda_G:=(-G/2 , G/2)^d$ where $G=\min\{\beta_1-\alpha_1,\ldots ,\beta_d-\alpha_d \}$.
(This can be done without loss of generality, since there always exists \emph{some} $x_0\in \RR^d$ such that $\Omega \supset \Lambda_G+x_0$,
and we can perform a global shift of coordinates to achieve $x_0=0$.)
In the Schr\"odinger operator above  $\Delta_\Omega $ denotes the Laplacian with Dirichlet or  Neumann boundary conditions
and $V_\Omega \colon \Omega \to \RR$ a measurable and bounded potential.
In the case that $\Omega=\Lambda_L := (-L/2 , L/2)^d \subset \RR^d$ is a cube with sidelengths $L>0$, $H_\Omega$ may also be equipped with periodic boundary conditions. This relates to functions on a torus, as will be made precise below.
To formulate the main results of \cite{NakicTTV-18,NakicTTV-18-JST}, we need another geometric notion.
\begin{figure}[ht]\centering
\begin{tikzpicture}
\pgfmathsetseed{{\number\pdfrandomseed}}
\foreach \x in {0.5,1.5,...,4.5}{
\foreach \y in {0.5,1.5,...,4.5}{
\filldraw[fill=gray!70] (\x+rand*0.35,\y+rand*0.35) circle (0.15cm);
}
}
\foreach \y in {0,1,2,3,4,5}{
\draw (\y,0) --(\y,5);
\draw (0,\y) --(5,\y);
}

\begin{scope}[xshift=-6cm]
\foreach \x in {0.5,1.5,...,4.5}{
\foreach \y in {0.5,1.5,...,4.5}{
\filldraw[fill=gray!70] (\x,\y) circle (1.5mm);
}
}
\foreach \y in {0,1,2,3,4,5}{
\draw (\y,0) --(\y,5);
\draw (0,\y) --(5,\y);
}
\end{scope}
\end{tikzpicture}
\caption{Illustration of $S_{\delta,\Omega}$ within the region $\Omega = \Lambda_5 \subset \mathbb{R}^2$ for $G=1$, and periodically (left)
and non-periodically (right) arranged balls, respectively.\label{fig:equidistributed}}
\end{figure}

\begin{definition}\label{def:equidistributed}
Let $G > 0$ and $0 < \delta < G/2$.
We say that a sequence $Z=\{z_j\}_{j \in \ZZ^d}$, is \emph{$(G,\delta)$-equidistributed} if
\begin{equation}\label{eq:equidistributed}
\forall j \in \ZZ^d \colon \quad  B(z_j , \delta) \subset G(\Lambda_1 + j) .
\end{equation}
Given a $(G,\delta)$-equidistributed sequence $Z$ and a generalized rectangle $\Omega$, we define the set
\begin{equation}\label{eq:eq:28}
S_{\delta , \Omega} := \bigcup_{j \in \ZZ^d } B(z_j , \delta) \cap \Omega,
\end{equation}
see Fig.~\ref{fig:equidistributed} for an illustration.
We suppress the dependence of  the set $S_{\delta,\Omega}$ on $G$ and the choice of the $(G,\delta)$-equidistributed sequence $Z=\{z_j\}_{j}$ in the notation.
\end{definition}

If a sequence $Z$ is $(G,\delta)$-equidistributed,
the set $S_{\delta} = \bigcup_{j \in \ZZ^d } B(z_j , \delta) $ is $(\gamma, a)$-thick with
$\gamma= \omega_d\delta^d$ and $a=(2G, \ldots,2G)$, where $\omega_d$ is the volume of the unit ball in $d$ dimensions.
Thus the class of subsets $S \subset \RR^d$ considered in the following theorem is more restrictive than the one in Theorem \ref{thm:kovrijkine}.

\begin{theorem}[Naki\'c, T\"aufer, Tautenhahn, Veseli\'c \cite{NakicTTV-18,NakicTTV-18-JST}] \label{thm:NTTV}
There is $N>0$ depending only on $d$, such that
for all $G>0$, all generalized rectangles $\Omega \subset\RR^d$, with  $\Omega \supset\Lambda_G$, all $\delta \in (0,G/2)$, all $(G,\delta)$-equidistributed sequences $Z$,
all real-valued $V_{\Omega} \in L^\infty (\Omega)$, all $E \in \RR$, and all $f\in \Ran \chi_{(-\infty,E]} (H_\Omega)$ we have
\begin{equation} \label{eq:UCP}
\lVert f \rVert_{L^2 (S_{\delta , \Omega})}^2
\geq
C_\sfuc \lVert f \rVert_{L^2 (\Omega)}^2, \ \text{ where }\
 C_\sfuc = \sup_{\lambda \in \RR}
 \left(\frac{\delta}{G}\right)^{N \bigl(1 + G^{4/3}\lVert V_\Omega-\lambda \rVert_\infty^{2/3} + G\sqrt{(E-\lambda)_+} \bigr)}
\end{equation}
and $t_+:=\max\{0,t\} $ for $t \in \RR$.
\end{theorem}

The estimate is \emph{scale-free}, in the sense that we have the same constant for all generalized rectangles $\Omega \subset\RR^d$, with  $\Omega \supset\Lambda_G$, for $G>0$ fixed.
In particular, this means that the constant remains bounded if we choose the domain $\Omega$ to be any of the  $d$-dimensional cubes $\Lambda_{LG}, L\in\NN$.
Note that the Hamiltonian $H_\Omega$ in Theorem \ref{thm:NTTV} is lower bounded in the sense of quadratic forms by $-\|V_\Omega\|_\infty$, consequently
$\chi_{(-\infty,E]}(H_{\Omega})=\chi_{[-\|V_\Omega\|_\infty,E]}(H_{\Omega})$
corresponding to the compact energy interval $[-\|V_\Omega\|_\infty,E]$.

Now we are in the position to formulate a question which arises when one compares Theorems \ref{thm:kovrijkine} and \ref{thm:NTTV}
and tries to find a unified framework which would cover both of them.

First of all one sees that the two theorems have an overlap. Indeed if $H=-\Delta$ on $\Omega = \RR^d$
and $E>0$ then any  $f\in \Ran(\chi_{(-\infty,E]}(H))$ satisfies $\supp\widehat{f}\subset[-\sqrt{E}, \sqrt{E}]^d$
and \eqref{eq:kovrijkine-1}  in Theorem \ref{thm:kovrijkine} gives
\begin{equation}\label{eq:spectral-inequality}
 \norm{f}{L^2(\RR^d)}^2
 \leq \left(\frac{C_1^d}{\gamma}\right)^{C_1(2\sqrt{E}\abs{a}_1+d)}\norm{f}{L^2(S)}^2, \quad \text{ where } \abs{a}_1:=\sum_{j=1}^d a_j
\end{equation}
for any $(\gamma, a)$-thick set $S$.
In \eqref{eq:spectral-inequality} the prefactor is more precise and the allowed `observability set' $S$ is more general
than in Theorem \ref{thm:NTTV}, however no potential $V$ is allowed.

It is remarkable that the prefactor in \eqref{eq:kovrijkine-1} does not change if one translates the
$d$-dimensional rectangle $J$ in Fourier space.
Even more remarkably, one can translate the $d$-dimensional rectangles $J_1,\ldots, J_d$  in
\eqref{eq:kovrijkine-2} independently of each other, while still not changing the corresponding prefactor.
Motivated by this fact one could be optimistic and ask whether it is possible to prove a variant of Theorem \ref{thm:NTTV} where (for an appropriate width $w>0$) the prefactor $C_\sfuc$ can be chosen uniformly over spectral subspaces $\Ran (\chi_{[E-w, E]}(H))$, independently of the energy shift $E$.
If $V\equiv 0$ this subspace consists of functions with momentum contained in the shell $B(0,\sqrt{E})\setminus B(0,\sqrt{E-w})$.

The results we derive in the present paper concern the case that the configuration space
$\Omega$ equals a torus $\TT_L^d:=[0,2\pi L_1]\times\ldots\times[0,2\pi L_d]$,
hence we spell out a precise version of the question corresponding to this geometric situation.
Correspondingly, the Laplacian is equipped with periodic boundary conditions.
We abbreviate here $H_{\TT_L^d}$ by $H_L$ and $S_{\delta, \TT_L^d}$ by $S_{\delta, L}$.

\begin{question} \label{question}
Is it possible to find for all $d\in \NN$,
all $\delta>0$, 
all $(1,\delta)$-equidistributed sequences $Z$,
all measurable and bounded potentials $V\colon\RR^d\to \RR$,
all $w>0$
and
all $L\in \NN^d$,
a constant $C=C(d, \delta, V,w,L)$ such that
\begin{equation}\label{eq:question-E-independent-UCP}
\forall f\in \bigcup_{E\in \RR}\Ran (\chi_{[E-w, E]}(H)): \quad
\int_{S_{\delta , L}} |f|^2
\geq
C\int_{\TT_L^d}|f|^2 \quad ?
\end{equation}
\end{question}
We elaborate several facets of this question:
\begin{enumerate}[(a)]
  \item
  Note that the constant $C$ appearing in \eqref{eq:question-E-independent-UCP} is by definition independent of the energy $E$ appearing as the maximum of the interval $[E-w,E]$.
  \item
If there is a positive answer to this question one would like to understand the dependence of  $C(d, \delta, V,w,L)$
on the various parameters. For instance, one could hope, since the set $S_{\delta, L}$ is constructed using an equidistributed sequence $Z$, that the constant could be chosen uniformly in $L\in \NN^d$.
  \item The dependence of $C(d, \delta, V,w,L)$  on the radius $\delta>0$ is related to the question of vanishing order (of linear combinations) of eigenfunctions, cf.~\cite{DonnellyF-88,JerisonL-99}.
Now, \cite{Taeufer-1710.09328} shows that in the specific  situation that $V\equiv 0$, $L=(1,\ldots,1)$ the function $\delta \mapsto C(d, \delta, V,w,L)$  cannot be polynomial for any width $w>0$
and any dimension $d \geq 2$.
  \item While in \eqref{eq:UCP} $E$ represents the \emph{supremum} of the energy interval,
in \eqref{eq:question-E-independent-UCP} it plays the role of the position of the energy interval.
We are now asking for bounds independent of $E$, and if such exist, are interested in how the prefactor in \eqref{eq:question-E-independent-UCP} depends on the \emph{width} $w$ of the energy interval.
  \item
We mention now two different intuitions related to Question \ref{question}.
The first makes one inclined to expect a negative answer, the second to expect a positive one.

The volume of the energy shell in momentum space corresponding to $[E-w,E]$ grows polynomially with $E$.
Thus, the dimension of the family of linear subspaces of `test functions' where
the bound \eqref{eq:question-E-independent-UCP} is supposed to hold,  is not necessarily bounded, which could be an argument in favour of a negative answer to the question.

On the other hand, the parameter $E$ can be incorporated in the Hamiltonian since
$\chi_{(-\infty,E]}(H_L)=\chi_{[-\|V\|_\infty-E,0]}(H_L-E)$.
Such a shift is physically just a re-normalization and results merely in a phase factor for the unitary Schr\"odinger evolution:
$ \exp(-it(H_L-E))= \exp(itE) \exp(-itH_L)$
which is irrelevant for most physical questions of interest.
Thus, this dynamic point of view is an argument in favour of a positive answer to Question \ref{question}.
This intuition is supported by the fact that Theorem \ref{thm:small-energy-interval}
gives a partial positive answer to Question \ref{question} even in the case of non-vanishing potential
and its proof actually relies on dynamical methods.
\end{enumerate}

The rest of the paper is organized as follows:
In Section \ref{sec:results} we spell out four main theorems
and comment  them in Section \ref{sec:constants}.
In Section \ref{sec:analytic} we give tools on analytic functions related to the Turan Lemma, which are used in
Sections \ref{s:proofLS}, \ref{sec:proofKLS}, and \ref{sec:proof-T-2.3} to prove three of the main results.
The last theorem (concerning $V\not \equiv 0$) is proven in Section \ref{s:nontrivial-V}.

\section{Results on equidistribution properties of functions on tori}\label{sec:results}
Here we state our main theorems concerning equidistribution properties of functions in an appropriately chosen
linear subspace of $L^2(\TT_L^d)$.
We first consider the case that the subspace consists of
Fourier series on the torus with localized Fourier coefficients.
For this purpose we need some notation:
Let $d\geq 1$. For $L:=(L_1, \ldots, L_d)\in \RR^d_+$ we define
\[
\TT_L^d:=[0,2\pi L_1]\times\ldots\times[0,2\pi L_d],
\]
and for $k\in\ZZ^d$ we set $k/L:=(k_1/L_1,\ldots, k_d/L_d)$.

\begin{figure}[H]\centering
\begin{tikzpicture}           
\begin{scope}[xscale = 1.5, yscale = 1.5]%[xshift=-6cm]
\filldraw[fill=gray!35] (1.5,2.7) rectangle (4.5,1.3);
\foreach \y in {0,1,2,3,4,5}{
 \draw (\y,0) -- (\y,3);
 }
\foreach \x in {0,0.5,1,1.5,2, 2.5,3}{
 \draw (0,\x) -- (5, \x);
 }
\foreach \x in {2,3,4}{
 \foreach \y in {1.5,2,2.5}{
   \filldraw[fill=gray!80] (\x,\y) circle (0.4mm);
   }
  }
\draw[<->, thick] (1.5,1.2) -- (4.5,1.2);
\draw (2.8,1.1) node {${b_1}$};
\draw[<->, thick] (1.35,1.3) -- (1.35,2.7);
\draw (1.2,1.8) node {${b_2}$};
\draw[<->, thick] (0,0.4) -- (1,0.4);
\draw (0.5,0.2) node {\small${ 1/L_1}$};
\draw[<->, thick] (0.2,0.5) -- (0.2,1);
\draw (0.5,0.75) node {\small${ 1/L_2}$};
\end{scope}
\end{tikzpicture}

\caption{$2$-dimensional example of the support of $\h{f}$.
The nodes denote the active Fourier modes as a subset of
$\big(\frac{1}{L_1}\ZZ\big)\times \big(\frac{1}{L_2}\ZZ\big)$
and the grey rectangle around them corresponds to the
$d$-dimensional rectangle $J$ as a subset of $\RR^2$.\label{fig:support}}
\end{figure}
For $f\in L^p(\TT_L^d)$ we adopt the convention
\begin{align}\label{eq:fourier-coefficients}
 \h{f}\colon \Gamma_{L} :=\left(\frac{1}{L_1}\ZZ\right)\times &\ldots\times \left(\frac{1}{L_d}\ZZ\right)\longrightarrow\RR\notag\\
\h{f}\left(\frac{k}{L}\right) = \h{f}\left(\frac{k_1}{L_1},\ldots,\frac{k_d}{L_d}\right) =
& \Big(\prod_{j=1}^d(2\pi L_j)\Big)^{-1}\int_{\TT_L^d} f(x)e^{-i x\cdot \frac{k}{L}}\dd x,
\end{align}
where $x\cdot (k/L):=\sum_{j=1}^d x_j k_j/L_j$.
In particular, $\supp\h{f}\subset \Gamma_{L}\subset\RR^d$ (see Figure \ref{fig:support}).

Our first two main results are as follows:

\begin{thm}\label{thm:1}
Let $f\in L^p(\TT_L^d)$ with $p\in[1,\infty]$ such that $\supp\widehat{f}\subset J$, where $J$ is a
 $d$-dimensional rectangle with sidelengths $b_1,\ldots, b_d$.
Let $S\subset \RR^d$ be a $(\gamma, a)$-thick set with $a=(a_1,\ldots,a_d)$ such that $0<a_j\leq 2\pi L_j$ for all $j=1,\ldots,d$.
Then,
\begin{equation}\label{eq:md-LogSer-torus-1.1}
\norm{f}{L^p(\TT_L^d)}\leq\Big(\frac{c^d}{\gamma}\Big)^{c a\cdot b+\frac{6d+1}{p}}\norm{f}{L^p(S\cap \TT_L^d)},
\end{equation}
where $c>0$ is a numerical constant and $a\cdot b$ stands for the Euclidean inner product in $\RR^d$.
\end{thm}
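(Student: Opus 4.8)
The plan is to transplant to the torus the complex-analytic method by which Kovrijkine refined the Logvinenko--Sereda theorem. Since $\supp\hat f\subset J$ and the nonzero Fourier modes of $f$ occur at frequencies in the dual lattice $L^{-1}\ZZ^d$, the function $f$ is a trigonometric polynomial, hence extends to an entire function $F$ on $\CC^d$; writing $\mu$ for the centre of $J$, the function $\euler^{-i\mu\cdot z}F(z)$ (non-periodic, but still bounded on $\RR^d$ by $\norm f{L^\infty(\TT^d_L)}$) has exponential type at most $b_j/2$ in the variable $z_j$, so by Bernstein's theorem $\abs{F(x+iy)}\le\euler^{\frac12\sum_j b_j\abs{y_j}}\,\norm f{L^\infty(\TT^d_L)}$ for all $x,y\in\RR^d$. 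First I would normalise the geometry by the anisotropic dilation $x_j\mapsto a_jx_j$: it turns $f$ into a function on the box-torus with sides $2\pi L/a_j$ whose Fourier support has side lengths $a_jb_j$, turns $S$ into a $(\gamma,(1,\dots,1))$-thick set, leaves both $L^p$-norms in \eqref{eq:md-LogSer-torus-1.1} multiplied by the same constant and leaves $a\cdot b$ invariant, and the hypothesis $a_j\le 2\pi L$ becomes that every side of the new torus has length $\ge1$. So I may assume $a=(1,\dots,1)$, writing $b$ for the new side lengths of the Fourier support.

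Next I would tile the torus by axis-parallel unit cubes $(Q_m)_m$, with bounded overlap $\le 2^d$ near the seams since the periods $2\pi L/a_j$ need not be integers, so that each $Q_m$ (viewed in $\RR^d$) satisfies $\abs{S\cap Q_m}\ge\gamma$ by thickness. Call $Q_m$ \emph{good} if $\int_{3Q_m}\abs f^p\le 2\cdot 3^d\int_{Q_m}\abs f^p$ and bad otherwise; summing the defining inequality of the bad cubes and using bounded overlap of the dilates $3Q_m$ shows that the bad cubes carry less than $\tfrac12\norm f{L^p(\TT^d_L)}^p$, so the good cubes carry at least one half. It therefore suffices to prove, for each good cube $Q$, the local estimate
\begin{equation*}
\norm f{L^p(Q)}\le\Big(\frac{c^d}{\gamma}\Big)^{c\,a\cdot b+\frac{6d+1}{p}}\norm f{L^p(S\cap Q)},
\end{equation*}
since raising to the power $p$ and summing over the good cubes, together with the lower bound just obtained, recovers \eqref{eq:md-LogSer-torus-1.1} up to the value of $c$.

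The local estimate rests on the classical concentration inequality for holomorphic functions: if $\phi$ is holomorphic on a disk $D$ whose diameter I normalise to $[0,1]$ and $\sup_D\abs\phi\le M\sup_{[0,1]}\abs\phi$, then $\norm\phi{L^p([0,1])}\le(C/\abs E)^{\frac{\log M}{\log 2}+\frac1p}\norm\phi{L^p(E)}$ for every measurable $E\subset[0,1]$; this follows by estimating the number of zeros of $\phi$ in $D$ by $\tfrac{\log M}{\log 2}$ via Jensen's formula, dividing out the associated Blaschke product, and applying the Remez inequality to the non-vanishing factor. I would apply this to the restrictions of $F$ to coordinate segments, using the exponential-type bound to control the holomorphic extension on the relevant disks and the good-cube (reverse-Hölder) inequality, fed through Bernstein's inequality and a local Nikolskii inequality, to bound $\norm f{L^\infty(2Q\cap\TT^d_L)}$ by $\big(C^d\prod_j\max(b_j,1)\big)^{1/p}\norm f{L^p(Q)}$, so that the growth ratio obeys $\log M\le c\,a\cdot b+\tfrac{cd}{p}$. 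To pass from the uncontrolled set $S\cap Q$ to the whole cube I would iterate over the $d$ coordinates: writing $Q=Q'\times I$, a Fubini argument produces $A\subset Q'$ with $\abs A\ge\gamma/2$ along which the one-dimensional slices of $S\cap Q$ have measure $\ge\gamma/2$; the one-variable estimate along these slices, integrated, controls $\norm f{L^p(A\times I)}$ by $\norm f{L^p(S\cap Q)}$; repeating on $Q'$ with the product set $A$ in place of $S\cap Q$ (halving $\gamma$ once more) and so on through all $d$ coordinates yields the local estimate, the exponential-type contributions summing to $c\,a\cdot b$, the $d$ successive factors $2$ in $\gamma$ absorbed into $c^d$, and the $\tfrac1p$-terms from the $d$ Nikolskii and Remez applications collected into $\tfrac{6d+1}{p}$.

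The step I expect to be the main obstacle is the localisation of the complex extension on a good cube. The bare exponential-type estimate bounds $F$ only by the \emph{global} supremum of $\abs f$ on the torus, which can be much larger than $\norm f{L^p(Q)}$; so all the quantitative content lies in extracting the right gain from the reverse-Hölder property through the Bernstein and Nikolskii inequalities, and --- intertwined with this --- in threading the $\tfrac1p$- and $b_j$-dependences through the $d$-step coordinate iteration tightly enough to land the exponent $c\,a\cdot b+\tfrac{6d+1}{p}$ and not something carrying a spurious extra factor of $d$. The non-integer periods are a minor nuisance, absorbed into the constant $c$ via the bounded-overlap tiling.
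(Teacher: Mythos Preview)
Your overall architecture --- rescale to $a=(1,\dots,1)$, tile by unit cubes, discard ``bad'' cubes carrying little mass, and on each remaining cube apply a one-dimensional holomorphic Remez-type inequality --- matches the paper's. The genuine gap is precisely the step you flag as the main obstacle: your doubling criterion $\int_{3Q}\abs{f}^p\le C\int_Q\abs{f}^p$ does \emph{not} yield the local bound you need on the complex extension. There is no ``local Nikolskii inequality'' of the form $\norm{f}{L^\infty(2Q)}\le C(b)^{1/p}\norm{f}{L^p(3Q)}$ for band-limited $f$; on any fixed cube a trigonometric polynomial can be arbitrarily small in $L^p$ while remaining of unit $L^\infty$ size. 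And even if such a bound held, the exponential-type estimate $\abs{F(x+iy)}\le e^{\frac12 b\cdot\abs{y}}\norm{f}{L^\infty(\RR^d)}$ involves the \emph{global} supremum, so it cannot be combined with a local $L^\infty$ bound to control $M$. The reverse-H\"older property of a doubling cube is simply too weak to localise the holomorphic growth.

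The paper's resolution is to choose a different good-cube criterion: call $\Lambda$ good if $\norm{\partial^\alpha f}{L^p(\Lambda)}\le 2^{2d/p}A^{\abs\alpha}(C_Bb)^\alpha\norm{f}{L^p(\Lambda)}$ for \emph{every} multi-index $\alpha$. Summing the complementary inequality over bad cubes and invoking the global Bernstein inequality shows bad cubes carry at most half the mass (your half-mass argument, but with the right criterion). On a good cube one then argues, again by summing over $\alpha$ and contradiction, that some point $x\in\Lambda$ satisfies $\abs{\partial^\alpha f(x)}\le 2^{3d/p}B^{\abs\alpha}(C_Bb)^\alpha\norm{f}{L^p(\Lambda)}$ simultaneously for all $\alpha$; a Taylor expansion about this $x$ then bounds $\abs{f(z)}$ on the relevant complex polydisk by $2^{3d/p}e^{c\abs b_1}\norm{f}{L^p(\Lambda)}$, giving $\log M\le \tfrac{3d}{p}\log 2+c\abs b_1$ directly. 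This derivative-based localisation, not reverse H\"older, is the missing idea. A secondary difference: instead of your coordinate-by-coordinate Fubini iteration, the paper selects a single line segment through the maximum point $y_0$ in an arbitrary direction $\eta\in\mathbb{S}^{d-1}$, chosen (via an averaging argument in spherical coordinates) so that $\abs{S\cap I}/\abs{I}\ge\gamma/C^d$, and applies the one-dimensional holomorphic lemma once; this avoids the loss you anticipate from iterating $d$ times.
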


The statement of the theorem holds true whether $J$ intersects $\Gamma_L$ or not. (In the latter case the statement is trivial.)
\begin{thm}\label{thm:2}
Let $f\in L^p(\TT_L^d)$ with $p\in[1,\infty]$.
Let $n\in\NN$ and assume that $\supp\widehat{f}\subset \bigcup_{l=1}^n J_l$,
where $J_l$'s are $d$-dimensional rectangles, each with sidelengths $b_1,\ldots, b_d$.
 Set $b=(b_1,\ldots, b_d)$.
Let $S\subset\RR^d$ be $(\gamma, a)$-thick with $a=(a_1,\ldots,a_d)$ such that $0<a_j\leq 2\pi L_j$ for all $j=1,\ldots, d$. Then,
\begin{equation}\label{eq:md-LogSer-torus-2}
\norm{f}{L^p(\TT_L^d)}\leq\Big(\frac{\tilde{c}^d}{\gamma}\Big)^{\big(\frac{\tilde{c}^d}{\gamma}\big)^n a\cdot b+n-\frac{(p-1)}{p}}\norm{f}{L^p(S\cap\TT_L^d)},
\end{equation}
where $\tilde{c}\geq 3$ is a numerical constant.
\end{thm}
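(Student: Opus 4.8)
The plan is to reduce Theorem \ref{thm:2} to the one-parallelepiped case (Theorem \ref{thm:1}) combined with Kovrijkine's technique for handling a union of supports, all transported to the torus via a periodization/lifting argument. First I would deal with the passage between $\TT^d_L$ and $\RR^d$: given $f\in L^p(\TT^d_L)$ with $\supp\h f\subset\bigcup_{l=1}^n J_l$, the Fourier support is a finite subset of the lattice $(\frac1L\ZZ)^d$, so $f$ is a trigonometric polynomial; one associates to it a function on a larger cube (or uses the thickness of $S\cap\TT^d_L$ directly as a periodic object) so that the geometric hypothesis ``$S$ is $(\gamma,a)$-thick with $a_j\le 2\pi L$'' is exactly what is needed to run Kovrijkine's Remez/Turán-type local estimates inside a single period. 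This is the step where the torus structure is genuinely used, and it is essentially the same reduction already carried out in the proof of Theorem \ref{thm:1}; I would invoke that proof's machinery rather than redo it.

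Next, following Kovrijkine's idea for unions, I would split the frequency cube-union into $n$ pieces and argue by a ``good block'' selection. Concretely, decompose $f=\sum_{l=1}^n f_l$ where $\h{f_l}$ is supported in (a disjointification of) $J_l$. Along each coordinate direction one partitions $\RR$ (equivalently, the period) into intervals on a scale comparable to $1/(a_j)$ and uses an averaging/pigeonhole argument over translates to find, for the given thick set $S$, a sub-box $Q\subset\TT^d_L$ on which $\|f\|_{L^p(Q)}$ controls a definite fraction of $\|f\|_{L^p(\TT^d_L)}$ while simultaneously $|S\cap Q|\ge \gamma' |Q|$ with $\gamma'$ comparable to $\gamma$. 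On such a box, because the total Fourier support is contained in $n$ parallelepipeds each of sidelengths $b_j$, a Bernstein-type inequality plus the Turán/Nazarov lemma for exponential sums gives a local Logvinenko--Sereda estimate with constant $(\tilde c^d/\gamma)^{C(a\cdot b + n)}$; iterating the ``union'' argument $n$ times is what produces the characteristic tower exponent $(\tilde c^d/\gamma)^n$ multiplying $a\cdot b$, exactly as in \eqref{eq:kov2} and \eqref{eq:eq:6}. Then summing the local estimates (using that the selected box carries a fixed fraction of the global norm) yields \eqref{eq:md-LogSer-torus-2}.

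The technical heart — and the step I expect to be the main obstacle — is the quantitative local estimate on a single box for a function whose Fourier transform lives on a union of $n$ parallelepipeds, i.e.\ the analogue of Kovrijkine's one-dimensional exponential-sum lemma in the torus/lattice setting. On the torus the frequencies are constrained to $(\frac1L\ZZ)^d$, so one must check that the Remez-type inequalities for exponential polynomials with $n\cdot(\prod_j (Lb_j+1))$ many frequencies degrade only in the stated way, uniformly in $L$ and in the positions of the $J_l$; getting the dependence to be $n$ in the exponent of the tower (not, say, $n^2$ or a dimensional power of $n$) requires the careful induction-on-$n$ bookkeeping from \cite{kovrijkine:01}. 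Once that lemma is in place, the rest is assembling the pigeonhole/averaging over translates and the passage between $\RR^d$ and $\TT^d_L$, both of which are routine given the proof of Theorem \ref{thm:1}. Finally, one tracks the additive $n-\frac{p-1}{p}=n-\frac{p-1}p$ contribution to the exponent (the $-\frac{p-1}{p}$ coming, as in \eqref{eq:kov2}, from a Hölder step converting an $L^\infty$ local bound into an $L^p$ one) and fixes $\tilde c\ge 3$ large enough to absorb all universal constants.
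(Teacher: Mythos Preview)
Your proposal misidentifies the mechanism that produces the tower exponent and, as a result, misses the key technical idea of the proof. You suggest applying a Turán/Nazarov--Remez inequality directly to the exponential polynomial with ``$n\cdot\prod_j(Lb_j+1)$ many frequencies'' and then checking that the constant stays uniform in $L$. It will not: Theorem~\ref{lemma:3} gives a constant $(C/|A|)^{N-1}$ where $N$ counts the exponentials (or $nm$ with polynomial coefficients), so feeding in all lattice frequencies in $\bigcup J_l$ produces an exponent of order $n\prod_j Lb_j$, which blows up with $L$. The tower $(\tilde c^d/\gamma)^n$ also does \emph{not} come from ``iterating the union argument $n$ times''; there is no such iteration.

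What the paper actually does (following Kovrijkine) is this: after writing $f=\sum_{l=1}^n f_l e^{ic_l\cdot x}$ via Lemma~\ref{lemma:sum-estimate}, restrict to a line segment $I_0$ in each unit cube and Taylor-expand each $f_l$ along that segment to order $m-1$, obtaining $F(t)=r(t)+T(t)$ with $r(t)=\sum_{l=1}^n p_l(t)e^{i\mu_l t}$ an exponential sum of exactly $n$ terms with polynomial coefficients of degree $m-1$. Now Theorem~\ref{lemma:3} applies with exponent $nm-1$, and the Taylor remainder $T$ is controlled via Bernstein (this is where the single-band bounds on each $f_l$ enter). Summing over all cubes --- no good-box pigeonhole is used here --- yields an inequality with a main term $(C^d/\gamma)^{pnm}$ and a remainder of size roughly $(|b|_1(C^d/\gamma)^n/m)^{mp}$. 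The choice $m\sim (C^d/\gamma)^n|b|_1$ kills the remainder and substitutes into the exponent $nm$ to give $(\tilde c^d/\gamma)^n a\cdot b$. The induction on $n$ appears only at the very end, to reduce overlapping parallelepipeds to the separated case, not to generate the tower. Your outline lacks the Taylor-plus-remainder step entirely, and without it the argument does not go through scale-free.
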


We emphasize that the estimates \eqref{eq:md-LogSer-torus-1.1} and \eqref{eq:md-LogSer-torus-2} are independent of the position
of the $d$-dimensional cubes $J$ and $J_l$, respectively, and of the number of active Fourier coefficients (or frequencies) contained in them.
Furthermore, they are \emph{scale-free}, in the sense that they are uniform in the scales $L_j$,
as long as $L_j\geq a_j/(2\pi)$ for all $j=1,\ldots, d$.

The prefactor in Theorem \ref{thm:1} is optimal up to the numerical constant $c$, as explained in
Section \ref{sec:constants}.
At the present point we do not have a clear understanding in which sense  the constant in Theorem \ref{thm:2} could be optimal (or not).
What is easy to see, is that
in the case of just one $d$-dimensional cube, i.e.~$n=1$, the estimate \eqref{eq:md-LogSer-torus-1.1} is better than estimate \eqref{eq:md-LogSer-torus-2},
since it shows a polynomial dependence on $1/\gamma$.
This is a manifestation of the different proofs used for the two theorems.

Note that since the function $f$ considered in Theorems \ref{thm:1} and \ref{thm:2} has a compactly supported Fourier transform it is in particular analytic.
Hence it is legitimate and natural to consider its extension to complex variables, which will be indeed done in Sections  \ref{sec:analytic} to \ref{sec:proofKLS} containing the proofs
of Theorems \ref{thm:1} and \ref{thm:2}.

Identifying $f$ with its periodic extension on $\RR^d$, Theorem \ref{thm:1} holds on every (shifted) periodicity cell.
Moreover, if such an inequality holds, the set $S$ must be thick.

\begin{theorem}\label{thm:sufficiency-necessity}
Let $S\subset\RR^d$ be a measurable set and let $p\in[1,\infty)$. The following statements are equivalent:
\begin{enumerate}
 \item [(i)] $S$ is thick;
 \item [(ii)] There exists an $L_0>0$ such that for all  $d$-dimensional rectangles $J$
 there exists a constant $C=C(S,J,d,p)$ such that for all $L\in\RR^d_+$ with
 $\min_{j=1,\ldots, d}L_j\geq L_0>0$, for all $f\in L^p(\TT_L^d)$ with $\supp\widehat{f} \subset J$,
 and for all $h\in\RR^d$ we have
 \begin{equation}\label{eq:extended-LS-torus-2}
 \norm{f}{L^p(\TT_L^d-h)} \leq C \norm{f}{L^p(S\cap(\TT_L^d-h))},
 \end{equation}
 where $f$ is identified with its periodic extension.

If $S$ is thick, the constant $C$ is as in Theorem \ref{thm:1}.
\end{enumerate}
\end{theorem}

\begin{remark}
\label{rmk:sufficiency-necessity}
We observe that the implication $(i)\Rightarrow (ii)$ holds also for $p=+\infty$.

Moreover, $(i)$ implies also
\begin{enumerate}
 \item [$(ii')$] There exists an $L_0>0$ such that for all $n \in \NN$, all
  $d$-dimensional rectangles $J_1, \ldots, J_n$
 there exists a constant $C$ depending only on $S,J_1, \ldots, J_n,d$, and $p$ such that for all $L\in\RR^d_+$ with
 $\min_{j=1,\ldots, d}L_j\geq L_0>0$, for all $f\in L^p(\TT_L^d)$ with $\supp\widehat{f} \subset J_1 \cup  \ldots \cup J_n$,
 and for all $h\in\RR^d$ we have
 \begin{equation}
 \norm{f}{L^p(\TT_L^d-h)} \leq C \norm{f}{L^p(S\cap(\TT_L^d-h))},
 \end{equation}
 where $f$ is identified with its periodic extension.
\end{enumerate}
\end{remark}

Note that $(ii')$ implies $(ii)$ and thus $(i)$.

Now we turn to the case that a potential $V\colon \TT^d \to \RR$ is added to the negative Laplacian and we are
studying functions in a spectral subspace of the Schr\"odinger operator $H:=-\Delta+V$.
Since the spectral decomposition of $H$ may be considered as a generalized type of Fourier transform this gives a kind of extension of
Theorem \ref{thm:1}. However, in this situation we are not able to make the prefactor in the inequality explicit.
Also, we consider here only the standard size torus $\TT^d:=[0,2\pi]^d$, although some of the results are valid also for general tori $\TT_L^d$.
We show that assembling the results of
\cite{AnantharamanM-14}, \cite{BurqZ-12}, \cite{BourgainBZ-13}, \cite{Bourgain-14}, \cite{RamdaniTTT-05}, and \cite{Miller-05c}
one arrives at the following

\begin{theorem}\label{thm:small-energy-interval}
Let $\emptyset \neq S\subset \TT^d$ be open and $V\in L^2(\TT^d)$.
Assume either that $d\in\{1,2,3\}$ or that $V\in L^\infty(\TT^d)$ and that its set of discontinuities  has measure zero.
Then there exist  constants $w=w(S,V)>0$, $\kappa=\kappa(S,V)>0$ such that
\begin{equation}\label{eq:small-energy-interval}
\forall \ f\in \bigcup_{E\in\RR} \Ran(\chi_{[E-w, E]}(H)): \quad \norm{f}{L^2(\TT^d)}\leq \kappa \norm{f}{L^2(S)}.
\end{equation}
\end{theorem}
The main point of Theorem \ref{thm:small-energy-interval} is that the prefactor $\kappa(S,V)$ is independent of the `energy shift' $E\in \RR$, which is the primary focus of Question \ref{question}.

One can extend the validity of \eqref{eq:small-energy-interval} even to
\begin{equation}\label{eq:shrinking-energy-interval}
\bigcup_{E\in\RR} \Ran(\chi_{[E-\widetilde w(E), E]}(H)): \quad \norm{f}{L^2(\TT^d)}\leq \kappa \norm{f}{L^2(S)}.
\end{equation}
where $\widetilde w\colon \RR \to (0, \infty)$ is a bounded continuous function satisfying
$\sup_{E\geq E_0}\widetilde w(E) \leq w(S,V)$ for some $E_0\in \RR$.
Namely, for $f\in \bigcup_{E\geq E_0} \Ran(\chi_{[E-\widetilde w(E), E]}(H))$
one can apply Theorem \ref{thm:small-energy-interval} and for
$f\in \bigcup_{E\leq E_0} \Ran(\chi_{[E-\widetilde w(E), E]}(H))$ Theorem \ref{thm:NTTV}.

\begin{remark}[Relevance for null-controllability of the heat equation]
Since the arXiv submission of the first version of this paper its results have
been used in several papers \cite{NakicTTV-18,EgidiV-18,NakicTTV-1810.10975,SeelmannV-1810.12167,EgidiS-1902.08141,GallaunST}
dealing with null-controllability of the heat equation.
For details we refer to the survey \cite{EgidiNSTTV-18} and the references therein.
\end{remark}

\section{Questions of optimality and discussion of the results}\label{sec:constants}

In this section we show that the constant in Theorem \ref{thm:1} is optimal,
that Theorem \ref{thm:2} identifies a strong annihilating pair,
that for $p=2$ our results recover Kovrijkine's theorem by passing to the limit $L\to\infty$,
and revisit Question \ref{question}.

We first demonstrate that the constant in Theorem \ref{thm:1} is generally optimal with respect to the parameters $\gamma$, $a$, and $b$,
 see also \cite[Examples 2.9-2.10]{EgidiNSTTV-18}.

\begin{example}\label{example:optimality-GenCase}
Let $a_1=\ldots =a_d=1$, $p\geq 1$, $b\geq 8\pi$, and $\epsilon\in (0,1)$.
We consider a periodized $\epsilon$-cube centred at the centre of each cell, more precisely the set
\[
S=F_1\times\ldots\times F_d\subset \RR^d
\]
such that each $F_j$ is 1-periodic and $F_j\cap[0,1]= \left[\frac{1}{2}-\frac{\epsilon}{2},\frac{1}{2}+\frac{\epsilon}{2} \right]$.
Then, $S$ is $(\gamma, 1)$-thick in $\RR^d$ with $\gamma=\epsilon^d$.

Let now $\NN\ni\alpha:=\left[\frac{b}{4\pi}\right] \geq 2$, $L=1/(2\pi)$, and $\tilde{L}=(L,\ldots,L)\in\RR^d$.
On the torus $\TT^1_{L}=[0,2\pi L]=[0,1]$ and on its $d$-dimensional counterpart  $\TT^d_{\tilde L}=[0,1]^d$ we consider the functions
\begin{align*}
& f:[0,1]\rightarrow \RR, \quad f(x)=(\sin(2\pi x))^\alpha,\\
& g: [0,1]^d \rightarrow \RR, \quad g(x)=\prod_{j=1}^d f(x_j)= \prod_{j=1}^d \sin(2\pi x_j)^\alpha.
\end{align*}
Clearly, $\supp\widehat{f}\subset [-2\pi\alpha, 2\pi\alpha]\subset \left[-\frac{b}{2},\frac{b}{2}\right]$,
$\supp \widehat{g}\subset\left[-\frac{b}{2},\frac{b}{2}\right]^d$, and the Fourier coefficients are equally spaced within their support.
Consequently, by Theorem \ref{thm:1} we have
\[
 \norm{g}{L^p(S\cap [0,1]^d)}\geq\Big(\frac{\epsilon^d}{c^d}\Big)^{c d b+\frac{6d+1}{p}}\norm{g}{L^p([0,1]^d)}.
\]
We now show that the pre-factor cannot be improved qualitatively, giving an upper bound on $\norm{g}{L^p(S\cap [0,1]^d)}$.
By separation of variables we have 
\[
\norm{g}{L^p(S\cap[0,1]^d)}= \prod_{j=1}^d \norm{f}{L^p(F_j\cap [0,1])}
\]
and similarly for $\norm{g}{L^p([0,1]^d)}$.
It is therefore enough to analyse the $L^p$-norm of $f$ on $F_1\cap [0,1]$.

By Jensen's inequality we have
\[
 \norm{f}{L^p([0,1])}^p=\int_0^1\abs{\sin(2\pi x)}^{p\alpha} \dd x \geq \left(\int_0^1 \abs{\sin(2\pi x)}\dd x\right)^{p\alpha}= \left(\frac{2}{\pi}\right)^{p\alpha}.
\]
By the change of variable $y=2\pi x-\pi$, the fact that $\abs{\sin(y+\pi)}=\abs{\sin(y)}$, 
the symmetry of the sine function, the inequality $\sin x \leq x$, and the
fact $\alpha +1/p \geq 1$, we estimate
\begin{align*}
\frac{ \norm{f}{L^p(F_1\cap [0,1])}}{ \norm{f}{L^p([0,1])}}
&
\leq \left(\frac{\pi}{2}\right)^{\alpha}\left(\int_{(1-\epsilon)/2}^{(1+\epsilon)/2} \abs{\sin(2\pi x)}^{p\alpha}\dd x\right)^{1/p}
\\
& = \left(\frac{\pi}{2}\right)^{\alpha}\left(\frac{1}{2\pi}\int_{-\pi\epsilon}^{\pi\epsilon} \abs{\sin(y)}^{p\alpha}\dd y\right)^{1/p}\\ 
& = \left(\frac{\pi}{2}\right)^{\alpha}\left(\frac{1}{\pi}\int_{0}^{\pi \epsilon} \sin^{p\alpha}(y)\dd y\right)^{1/p}
\leq \left(\frac{\pi}{2}\right)^{\alpha}\left(\frac{1}{\pi}\int_{0}^{\pi \epsilon} y^{p\alpha}\dd y\right)^{1/p}
\\
& = \left(\frac{\pi}{2}\right)^{\alpha}
\left(\frac{1}{\pi} \frac{(\pi\epsilon)^{1+p\alpha}}{1+p\alpha}\right)^{1/p}
 =
\frac{\epsilon^{1/p+\alpha} \pi^{2\alpha}}{2^\alpha (1+p\alpha)^{1/p}}
\quad  \leq \left(\frac{\epsilon}{(2/\pi^2)}\right)^{\alpha+1/p}.
\end{align*}
Since $\alpha +1/p =\left[\frac{b}{4\pi}\right] +1/p \geq \frac{b}{4\pi} -1$, for $\epsilon < 2/\pi^2$ we obtain
\begin{equation*}
 \norm{f}{L^p(F_1\cap [0,1])} \leq \left(\frac{\epsilon}{(2/\pi^2)}\right)^{\frac{b}{4\pi} -1}\norm{f}{L^p([0,1])},
\end{equation*}
which holds also for $\epsilon \geq  2/\pi^2$ trivially.
Consequently
\begin{equation*}
 \norm{g}{L^p(S\cap[0,1]^d)}
 \leq
 \left(\frac{\gamma}{(2/\pi^2)^d}\right)^{\frac{b}{4\pi} -1}\norm{g}{L^p([0,1]^d)}.
\end{equation*}
This shows that in general we cannot obtain a constant in \eqref{eq:md-LogSer-torus-1.1}
which is qualitatively better than $(c^d/\gamma)^{c(b+d)}$.
\end{example}

If we restrict to the subclass of functions whose Fourier coefficients are few but spread out, it is likely that there is a better bound,
as is illustrated by the following

\begin{example}\label{example:notOpt-subclass}
Let $b\in \NN$, $\gamma\in (0,1)$, $S$ be the $1$-periodic set such that $S\cap [0,1]=[0,\gamma]$, and
$f\colon [0,1]\to \RR$ be the function $f(x)=\sin(2b\pi x)$.
This function has two non-zero Fourier coefficients at $-2b\pi$ and $2b\pi$,
growing further apart as $b$ increases. For the $L^1$-norm of $f$ on $[0,1]$ and $[0,\gamma]$ we calculate
\begin{equation*}
 \frac{\norm{f}{L^1([0,\gamma])}}{\norm{f}{L^1([0,1])}} \leq \frac{\pi}{2}\int_0^\gamma 2b\pi x \dd x = \frac{\pi^2}{2} b\gamma^2,
\end{equation*}
suggesting a behaviour of type $b\gamma^2$ instead of $\gamma^b$, as in Theorem \ref{thm:1}.
\end{example}

\begin{remark}\label{rmk:annihilating-pair}
 The definition of annihilating pairs in \eqref{eq:annihilating-pair} can be adapted for functions in $L^2(\TT_L^d)$,
 see \cite[p. 102]{HavinJ-94} for a formulation in the case $d=1$ and $L=1$.
 Let $\mathcal{E}\subset \TT_L^d$ be measurable and $\mathcal{B}\subset\Gamma_{L}:= (\frac{1}{L_1}\ZZ)\times\ldots\times(\frac{1}{L_d}\ZZ)$.
 We say that $(\mathcal{E},\mathcal{B})$ is a strong annihilating pair
 if there exists a constant $C_a=C_a(\mathcal{E}, \mathcal{B})$ such that
 \begin{align}
   \label{eq:discrete-annihilating-pair}
  \forall\ f\in L^2(\TT_L^d): \quad  \norm{f}{L^2(\TT_L^d)}^2
  \leq
  C_a\Big(\norm{f}{L^2(\mathcal{E}^c)}^2  +\norm{\widehat{f}\chi_{\mathcal{B}^c}}{\ell^2(\Gamma_{L})}^2 \Big)
  \\
  \intertext{where} \nonumber
  \quad \norm{\widehat{f}\chi_{\mathcal{B}^c}}{\ell^2(\Gamma_{L})}^2
  =(2\pi)^d (L_1\cdots L_d) \sum_{k/L\in\mathcal{B}^c} \abs{\widehat{f}(k/L)}^2 .
 \end{align}
 For $p=2$,  Theorem \ref{thm:2} implies that the pair $(\TT_L^d\setminus S, \mathcal{G})$ is strong annihilating,
 where   $\mathcal{G}= \Gamma_{L} \cap \bigcup_{j=1}^n J_n$. Indeed, let $f\in L^2(\TT_L^d)$ and denote by $\mathcal{F}^{-1}$ the Fourier inversion.
 Then, by Parseval Theorem and Pythagoras we obtain
 \begin{align*}
  \norm{f}{L^2(\TT_L^d)}^2 =\norm{\widehat{f}}{\ell^2(\Gamma_{L})}^2
  & = \norm{\widehat{f}\chi_\mathcal{G}}{\ell^2(\Gamma_{L})}^2 +\norm{\widehat{f}\chi_{\mathcal{G}^c}}{\ell^2(\Gamma_{L})}^2 \\
  & = \norm{\mathcal{F}^{-1}(\widehat{f}\chi_\mathcal{G})}{L^2(\TT_L^d)}^2 +\norm{\widehat{f} }{\ell^2(\mathcal{G}^c)}^2
 \end{align*}
 Denoting the prefactor in Theorem  \ref{thm:2} by $C$ and using the triangle inequality for $\widehat{f}\chi_{\mathcal{G}}=\widehat{f}- \widehat{f}\chi_{\mathcal{G}^c}$ gives
 \begin{align*} \\
  \norm{f}{L^2(\TT_L^d)}^2
  & \leq 2 C^2 \norm{\mathcal{F}^{-1}(\widehat{f})}{L^2(S\cap \TT_L^d)}^2 +2 C^2 \norm{\mathcal{F}^{-1}(\widehat{f}\chi_{\mathcal{G}^c})}{L^2(S\cap \TT_L^d)}^2
  +\norm{\widehat{f} }{\ell^2(\mathcal{G}^c)}^2\\
  & \leq  2 C^2  \norm{f}{L^2(S\cap \TT_L^d)}^2 +2 C^2 \norm{\mathcal{F}^{-1}(\widehat{f}\chi_{\mathcal{G}^c})}{L^2(\TT_L^d)}^2 +\norm{\widehat{f}}{\ell^2(\mathcal{G}^c)}^2
  \\
  & \leq  (1+2 C^2 )\left( \norm{f}{L^2(S\cap \TT_L^d)}^2 +\norm{\widehat{f}}{\ell^2(\mathcal{G}^c)}^2 \right)
  \end{align*}
which proves  \eqref{eq:discrete-annihilating-pair} with $C_a=(1+2 C^2 )$.
Note that this constant is again scale-free.
\end{remark}

\begin{remark}\label{rmk:approximation}
For $p=2$, Theorem \ref{thm:kovrijkine} can be recovered from Theorems \ref{thm:1} and \ref{thm:2}.
For notational simplicity let us first consider the case $d=1$.
Let $J$ be an interval of length $b$ and let $f\in L^2(\RR)$ with $\supp \widehat{f}\subset J$.
W.l.o.g. we may assume that $\widehat{f}$ is continuous. Indeed, if this is not the case we approximate $\widehat{f}$ in $L^2(\RR)$ by a sequence $(\eta_j)_{j\in\NN}$
of continuous functions with support in $J$. By Plancherel Theorem \cite[Corollary 4.7]{MuscaloS-13} $f$ is approximated in $L^2(\RR)$
by the sequence obtained from $(\eta_j)_{j\in\NN}$ by Fourier inversion.

For $L\in\NN$, we define the functions
 \[
  g_L(x):=\left\{\begin{array}{ll}
          \sum\limits_{{\frac{k}{L}\in (J\cap\frac{1}{L}\ZZ)}}\frac{1}{L} \widehat{f}\big(\frac{k}{L}\big)e^{i\frac{k}{L} x} & \quad x\in [0,2\pi L]=\TT_L,\\
          0 & \quad x\not\in\TT_L,
         \end{array}\right.
 \]
so that $(g_L)_{L\in\NN}\subset L^2(\RR)$, and $\supp\widehat{g_L}\subset J$ when considered as $L^2(\TT_L)$-functions.
Since the $g_L$'s are constructed using a Riemann sum, by means of Fourier inversion we obtain
\[
\lim_{L\to+\infty} g_L(x) = \int_J \widehat{f}(\xi)e^{i\xi x}\dd \xi = f(x)\quad \text{a. e.}
\]
Moreover, Parseval and Plancherel Theorems \cite[Corollary 1.6 and Corollary 4.7]{MuscaloS-13} together with the use of Riemann sums yield
 \[
 \lim_{L\to+\infty}\norm{g_L}{L^2(\RR)} =\norm{f}{L^2(\RR)}.
 \]
 Since pointwise convergence and convergence of the $L^2$-norm
 imply weak convergence in $L^2(\RR)$ \cite[Proposition 4.7.12]{Bogachev-07}, and weak convergence in $L^2(\RR)$ and convergence of the $L^2$-norm
 imply strong convergence in $L^2(\RR)$ \cite[Corollary 4.7.16]{Bogachev-07},
 we conclude that for all measurable $\Omega\subset\RR$ we have
 \[
 \lim_{L\to+\infty}\norm{g_L-f}{L^2(\Omega)}=0.
 \]
 Let now $S$ be $(\gamma, a)$-thick in $\RR$. Since $a<\infty$, there exists $L_0>0$ such that $a\leq 2\pi L_0$. For all $L\geq L_0$, Theorem \ref{thm:1} yields
 \[
  \norm{g_L}{L^2(\RR)}=\norm{g_L}{L^2(\TT_L)}
  \leq \left(\frac{c}{\gamma}\right)^{cab+\frac{7}{2}} \norm{g_L}{L^2(S\cap\TT_L)}
  =    \left(\frac{c}{\gamma}\right)^{cab+\frac{7}{2}}\norm{g_L}{L^2(S)},
 \]
 and taking the limit $L\to+\infty$ we obtain
 \[
  \norm{f}{L^2(\RR)}\leq \left(\frac{c}{\gamma}\right)^{cab+\frac{7}{2}}\norm{f}{L^2(S)},
 \]
 that is, the statement of Theorem \ref{thm:kovrijkine}~(I) for $p=2$ and $d=1$.

 Similarly, considering a function $f\in L^2(\RR)$ with $\supp\widehat{f}\subset \bigcup_{l=1}^n J_l$ we are able to recover
 Theorem \ref{thm:kovrijkine}~(II).  For general dimensions $d$ the notation is more cumbersome, but the argument is completely analogous.
\end{remark}

Consider $p=2$ and the case that $\TT_L^d$ is a $d$-dimensional cube,
i.e. $\TT_L^d=[0,2\pi L]^d$. In this case, Theorem \ref{thm:1} gives us an equidistribution property of
linear combinations of eigenfunctions of the operator $-\Delta_L^P$, where $\Delta_L^P$ is the Laplacian on $\TT_L^d$ with periodic boundary conditions.
Analogously as in  \eqref{eq:spectral-inequality}, let $E>0$ and let $f\in \Ran(\chi_{(-\infty,E]}(-\Delta_L^P))$, then  $\supp\widehat{f}\subset[-\sqrt{E}, \sqrt{E}]^d$
and Theorem \ref{thm:1} turns into the estimate
\begin{equation}\label{eq:spectral-inequality-torus}
 \norm{f}{L^2(\TT_L^d)}^2
 \leq 
 \left(\frac{c^d}{\gamma}\right)^{2c\abs{a}_1\sqrt{E}+\frac{7}{2}d}
 \norm{f}{L^2(S\cap\TT_L^d)}^2,
 \quad \text{ where } \quad \abs{a}_1:=\sum_{j=1}^d a_j.
\end{equation}
This inequality motivates us to revisit Question \ref{question} in the case $V\equiv 0$ in the following
\begin{example}\label{example:energy-shell}
Consider a function $f$ with $\supp\widehat{f}\subset\Sigma\subset\RR^d$ where
$\Sigma=\{p\in\RR^d\;\vert\; (E-1)\leq\norm{p}{2}^2\leq E\}$ for a given $2<E\in\RR$.
We want to cover $\Sigma$ with disjoint $d$-dimensional cubes, in order to use Theorem \ref{thm:2}.
When $d=1$, Taylor expansion shows that $n=2$ intervals of length $1/\sqrt{2E}$ suffice for the covering and the prefactor in Theorem \ref{thm:2} reduces to
\[
\left(\frac{c}{\gamma}\right)^{\left(\frac{c}{\gamma}\right)^{2}a/\sqrt{E}+2-\frac{1}{2}}
\leq \left(\frac{c}{\gamma}\right)^{\left(\frac{c}{\gamma}\right)^{2}a+2-\frac{1}{2}}.
\]
If $d=2$ the volume of the spherical shell  $B(0,\sqrt{E})\setminus B(0,\sqrt{E-1})$ is bounded by  $\pi$
and is in particular independent of $E$. However, if one wants to cover the shell with unit $d$-dimensional cubes,
their number must grow with $E$ due to the unbounded growth of the circumference of the annulus.
Thus  Theorem \ref{thm:2} does not lead to a positive answer to Question \ref{question} in $d\geq 2$ even if $V\equiv 0$.

Note that, if we replace $\Sigma$ by $\Sigma_w=\{p\in\RR^d\;\vert\; (E-w)\leq\norm{p}{2}^2\leq E\}$
for a sufficiently small $w$ (depending on the open observability set $S\neq \emptyset$ under consideration)
Theorem \ref{thm:small-energy-interval}  is applicable.
\end{example}

\section{Analytic Tools motivated by the Turan lemma}\label{sec:analytic}

The following two results are inspired by \cite{Nazarov-94} and their proofs can be found in \cite{Kovrijkine-thesis,Kovrijkine-01}.
The first one is a key ingredient for the proof of Theorem \ref{thm:1}, while the second plays an analogous role for Theorem \ref{thm:2}.
In particular, the proofs of Theorem \ref{thm:1} and of Theorem \ref{thm:2} will consist in reducing the situation to
the following theorems (and Lemma \ref{lemma:level-set-argument}).

The following result is formulated in Lemma~1 (and its proof) of \cite{Kovrijkine-thesis} and \cite{Kovrijkine-01} where we refer for a proof.
It relates the sup norm of an analytic function on a real interval to its sup norm on a positive measure subset.
As an a-priori information one has to know a bound on the sup norm of the function on a sufficiently large complex disc surrounding the real interval.

\begin{thm}\label{lemma:2}
Let $z_0\in\RR$ and let $\phi$ be an analytic function on $D(z_0,5):= \{z \in \CC \mid |z-z_0| < 5\}$.
We consider $I\subset\RR$ an interval of unit length such that $z_0\in I$ and $A\subset I$ a measurable set of non-zero measure, i.e., $\abs{A}>0$.
Set $M=\max_{\abs{z-z_0}\leq 4}\abs{\phi(z)}$ and assume that $\abs{\phi(z_0)}\geq 1$, then 
\begin{equation}\label{eq:lemma1}
\sup_{x\in I}\abs{\phi(x)}\leq\Bigg(\frac{12}{\abs{A}}\Bigg)^{2\frac{\log M}{\log 2}}\cdot\sup_{x\in A}\abs{\phi(x)}.
\end{equation}

\end{thm}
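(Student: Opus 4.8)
The plan is to exploit the classical Remez-type / Turán-type philosophy: an analytic function which is large at a point cannot be small on a set of positive measure unless its growth (measured by $M$) is large. First I would reduce to a statement about a polynomial. The natural device is to replace $\phi$ by a polynomial $P$ of controlled degree which approximates $\phi$ well on $I$ and on the slightly larger disc, using the bound $M$ on $|\phi|$ over $|z-z_0|\le 4$ together with Cauchy estimates on the Taylor coefficients of $\phi$ at $z_0$; the tail of the Taylor series on a disc of radius, say, $2$ decays geometrically, so truncating at degree $\sim \log M$ (more precisely degree comparable to $\frac{\log M}{\log 2}$, which is where that exponent will come from) produces a polynomial $P$ with $\sup_{|z-z_0|\le 2}|\phi - P|$ small compared to $1 \le |\phi(z_0)|$. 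This simultaneously guarantees $|P(z_0)|\gtrsim 1$ and $\sup_I |P| \lesssim \sup_I |\phi| + (\text{small})$, and likewise $\sup_A|P| \lesssim \sup_A|\phi| + (\text{small})$.

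Next I would apply the Remez inequality for polynomials: if $P$ has degree $m$, then for any measurable $A \subset I$ with $|I|=1$,
\[
\sup_{x \in I} |P(x)| \le \left(\frac{4}{|A|}\right)^{m} \sup_{x\in A}|P(x)|,
\]
(the constant $4$ being the standard one for the Remez inequality on an interval, comparing a set to its complement via Chebyshev polynomials). With $m$ of size roughly $2\frac{\log M}{\log 2}$ this yields exactly a bound of the form $\sup_I|\phi| \le (12/|A|)^{2\log M/\log 2}\sup_A|\phi|$ after absorbing the approximation errors into the slightly enlarged constant $12$ in place of $4$. The constant $12$ (rather than $4$) and the factor $2$ in the exponent are precisely the slack one needs to cover: (a) the passage $\phi \rightsquigarrow P$, (b) the fact that $|P(z_0)|$ is only bounded below by a constant less than $1$, which must be incorporated into the Remez constant, and (c) rounding the degree $m$ up to an integer.

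The main obstacle I anticipate is making the approximation step quantitatively clean: one must choose the truncation degree $m$ as a function of $M$ so that the Taylor tail is, say, $\le \tfrac14$ on the disc $D(z_0,2)$ — this requires that on that disc $|\phi| \le M$ gives, by Cauchy's estimate on $D(z_0,4)$, coefficient bounds $|a_k| \le M/4^k$, so the tail $\sum_{k>m}|a_k| 2^k \le M \sum_{k>m} 2^{-k} = M\, 2^{-m}$, which is $\le \tfrac14$ once $2^m \ge 4M$, i.e. $m \ge \frac{\log M}{\log 2} + 2$. Carrying this through, verifying that the resulting $P$ still satisfies $|P(z_0)| \ge \tfrac34$ and that the exponent genuinely comes out as $2\frac{\log M}{\log 2}$ after rounding and after comparing $\sup$ over $I$ of $P$ versus $\phi$, is the delicate bookkeeping; conceptually, though, it is entirely standard (and this is exactly the Nazarov–Turán lemma from \cite{nazarov:92}, whose proof is reproduced in \cite{kovrijkine:01}). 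Since the statement attributes the proof to \cite{kovrijkine:01}, I would simply cite it there, but the sketch above is the route I would follow to reconstruct it.
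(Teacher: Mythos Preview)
The paper itself does not prove this statement; it defers to \cite{kovrijkine:01} (following ideas of \cite{nazarov:92}). Your reconstruction, however, has a genuine gap at the step ``absorbing the approximation errors into the slightly enlarged constant $12$ in place of $4$.'' After truncation you obtain
\[
\sup_I|\phi| \;\le\; \Bigl(\tfrac{4}{|A|}\Bigr)^{m}\sup_A|\phi| \;+\; \Bigl(\tfrac{4}{|A|}\Bigr)^{m}\tau \;+\; \tau,
\]
and the middle term is the obstruction. From the Cauchy bound $|a_k|\le M/4^k$ the tail on $I$ satisfies $\tau \le \tfrac{M}{3}\,4^{-m}$, hence $\bigl(\tfrac{4}{|A|}\bigr)^{m}\tau \le \tfrac{M}{3|A|^{m}}$. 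For $|A|<1$ this quantity \emph{grows} with $m$, and once $|A|$ is moderately small it already exceeds $M\ge \sup_I|\phi|$; it therefore cannot be absorbed into $(12/|A|)^{m}\sup_A|\phi|$ without an a~priori lower bound on $\sup_A|\phi|$ of the very kind you are trying to establish. Passing from $4$ to $12$ buys you a factor $3^{m}$, but the error term is of order $|A|^{-m}$ relative to $\tau$, which is far larger. Increasing $m$ to shrink $\tau$ only makes $(4/|A|)^m$ larger, so there is no choice of $m$ that closes the argument.

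The proof in \cite{kovrijkine:01} avoids any additive approximation. One first counts zeros: by Jensen's formula on $D(z_0,4)$ together with $|\phi(z_0)|\ge 1$, the number of zeros of $\phi$ in $D(z_0,2)$ is at most $N=\log M/\log 2$. One then factors $\phi = p\cdot g$ with $p$ the polynomial of degree $\le N$ carrying exactly those zeros and $g$ analytic and zero-free on $D(z_0,2)$. Remez is applied to $p$ (producing the factor $(C/|A|)^{N}$), while the oscillation of $|g|$ on $I$ is controlled via the harmonicity of $\log|g|$. Because this factorisation is \emph{multiplicative}, the Remez bound on $p$ and the two-sided control on $|g|$ compose cleanly, with no additive remainder, and the exponent $2\log M/\log 2$ emerges directly from the zero count. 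That multiplicative factoring, rather than Taylor truncation, is the idea your plan is missing.
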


\begin{remark}
\begin{enumerate}
\item
We have $\log M /\log 2 \geq 1$ unless $\phi$ has no zeros in the disc $D(z_0,2)\subset \CC$. 
In the latter case we have $\sup_{x\in I}\abs{\phi(x)}\leq M^3 \cdot\sup_{x\in A}\abs{\phi(x)}$.
This follows from the standard Harnack inequality for positive harmonic functions.
\item
If $\phi$ is non-constant the assumptions of the lemma ensure $\log M>\log \abs{\phi(z_0)}\geq 0$.
 \item 
In the Lemma 1 in \cite{Kovrijkine-01} the exponent is $\frac{\log M}{\log 2}$ rather than $2\frac{\log M}{\log 2}$ as stated in Inequality \eqref{eq:lemma1}.
We were only able to reproduce the proof with this extra factor of two.  For the application in the rest of the paper this factor is irrelevant, so we are conservative and include it.
\end{enumerate}

\end{remark}

The following theorem is an extension of a classical lemma by Turan \cite{Turan-84}.

\begin{thm}\label{lemma:3}
If $r(x)=\sum_{k=1}^n p_k(x)e^{i\lambda_k x}$, where $p_k(x)$ are polynomials of degree at most $m-1$ and $\lambda_1,\ldots, \lambda_n \in\RR$,
and $A$ is a measurable subset of an interval $I$ such that $\abs{A}>0$, then
\begin{equation}\label{eq:lemma3}
\norm{r}{L^\infty(I)}\leq\Big(\frac{316\abs{I}}{\abs{A}}\Big)^{nm-1}\norm{r}{L^\infty(A)}.
\end{equation}
\end{thm}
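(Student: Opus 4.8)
The plan is to reduce the bound for an exponential polynomial $r(x)=\sum_{k=1}^n p_k(x)e^{i\lambda_k x}$ to the classical Turán-type estimate for ordinary polynomials, via a contour integral / divided-difference representation. First I would observe that $r$ is (the restriction to $\RR$ of) an entire function annihilated by a linear constant-coefficient differential operator of order $N:=nm$, namely $\prod_{k=1}^n\bigl(\tfrac{d}{dx}-i\lambda_k\bigr)^m$, so $r$ belongs to an $N$-dimensional shift-invariant space of entire functions. The key structural fact I would exploit is that on any interval $I$ one can write $r$ through a generalized Taylor-type formula: fixing a point $x_0\in I$ one has, by integrating the ODE, a representation of $r(x)$ on $I$ in terms of the "Newton data" $r(x_0),r'(x_0),\dots,r^{(N-1)}(x_0)$, or equivalently in terms of values of $r$ at $N$ (possibly confluent) nodes. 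Concretely, I would use that $r$ coincides on $I$ with a function of the form $\int r\,d\mu$ against a kernel built from the $e^{i\lambda_k x}$, and extract an auxiliary honest polynomial $Q$ of degree $\le N-1$ that carries the oscillation.

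The cleanest route is the one Kovrijkine uses, which I would reproduce: after an affine change of variables normalize $I=[0,1]$. Write $r$ in the basis $\{x^j e^{i\lambda_k x}\}$ and note that on the unit interval $|e^{i\lambda_k x}|$ varies by a bounded factor; the subtlety is that $\lambda_k$ may be complex, so $|e^{i\lambda_k x}|$ can be large, but one can always factor out $e^{i\lambda_{k_0}x}$ for the index $k_0$ maximizing $\Im\lambda_k$ (or minimizing, whichever makes the modulus largest on $I$) so that the remaining exponentials are bounded by $1$ on $I$. Then $r(x)=e^{i\lambda_{k_0}x}g(x)$ where $g$ still has the same form; and $|e^{i\lambda_{k_0}x}|$ is comparable on all of $I$ versus on $A\subset I$ up to a constant absorbed into the final bound. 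This reduces matters to controlling $g$. For $g$ one applies the Remez–Turán inequality for exponential sums on an interval: there is a classical result (the Turán lemma and its refinements, e.g.\ Nazarov's form) stating precisely that $\|g\|_{L^\infty(I)}\le \bigl(C|I|/|A|\bigr)^{N-1}\|g\|_{L^\infty(A)}$ for exponential polynomials with $N$ total terms (counting multiplicity), with an explicit constant; tracking constants gives $C=316$ and exponent $nm-1$.

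The main obstacle — and the step deserving care — is the passage from a measurable set $A$ of positive measure to a finite set of sample points, i.e.\ the Remez-type inequality itself rather than a mere nodal interpolation bound. The naive approach picks $N$ points in $A$ and uses the Vandermonde-type interpolation formula, but then the interpolation constant depends on the mutual distances of the chosen points, which can be tiny. Turán's lemma circumvents this: one argues that the "worst" configuration of $A$ (for a fixed measure $|A|$) is an interval placed at the end of $I$, by a symmetrization/monotonicity argument on the extremal problem $\sup_{x\in I}|g(x)|/\|g\|_{L^\infty(A)}$, and for $A$ an interval the bound follows from the classical polynomial Remez inequality after conformally mapping and using that $g$ restricted appropriately behaves like a polynomial of degree $N-1$ in a suitable variable. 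Since the statement explicitly attributes the result to an extension of Turán's lemma and says the proof is in \cite{kovrijkine:01}, I would present the reduction to the normalized interval and the factoring-out of the dominant exponential in detail, and then invoke the Remez/Turán estimate for the constant $316$ and exponent $nm-1$, verifying the exponent count ($n$ exponentials, each with a polynomial factor of degree $\le m-1$, giving $nm$ parameters and exponent $nm-1$) and that the constant survives the change of variables back to a general interval $I$.
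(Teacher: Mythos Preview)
The paper does not actually prove Theorem~\ref{lemma:3}: it states the result and refers to \cite{kovrijkine:01} for the proof, noting that the constant-coefficient case is Nazarov's \cite[Theorem~1.4]{nazarov:92} and that Kovrijkine extended it to polynomial coefficients. So there is no in-paper argument to compare against; the question is whether your sketch is a viable route to the stated bound.

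It is not, as written. The decisive gap is the ``factor out the dominant exponential'' step. You claim that after writing $r(x)=e^{i\lambda_{k_0}x}g(x)$ the factor $|e^{i\lambda_{k_0}x}|$ is comparable on $I$ and on $A\subset I$ up to a constant that can be absorbed. This is false unless you control $\Im\lambda_{k_0}$: on $I=[0,1]$ the ratio $\max_{x\in I}|e^{i\lambda_{k_0}x}|/\min_{x\in I}|e^{i\lambda_{k_0}x}|=e^{|\Im\lambda_{k_0}|}$, which can be arbitrarily large. The entire content of the Tur\'an--Nazarov inequality is precisely that the constant is \emph{independent of the frequencies} $\lambda_k\in\CC$, so any reduction whose constant depends on the $\lambda_k$ is circular. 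Moreover, after factoring out one exponential the remainder $g$ is again an exponential polynomial of the same type with shifted frequencies, so nothing has been gained.

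The second proposed reduction --- symmetrize $A$ to an interval and then apply the polynomial Remez inequality ``after conformally mapping'' --- also does not go through. Exponential polynomials are not polynomials in any conformal coordinate (unless the $\lambda_k$ are commensurable and real), and the extremal-set monotonicity you invoke is known for algebraic polynomials via Chebyshev but not for general exponential sums. The actual proofs in \cite{nazarov:92,kovrijkine:01} proceed differently: they exploit a uniform bound on the number of zeros of $r$ in a complex neighbourhood of $I$ (an entire function in the $N=nm$-dimensional solution space of $\prod_k(\tfrac{d}{dx}-i\lambda_k)^m$ has at most $N-1$ zeros on any real interval, with a suitable complex analogue) together with a Remez-type argument for analytic functions with controlled zero count, in the spirit of Theorem~\ref{lemma:2}. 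Your first paragraph touches on the right structural fact (the $N$-dimensional ODE solution space), but the subsequent reductions abandon it for shortcuts that do not close.
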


For constant coefficients $p_k$ this was proven by F.~Nazarov  in \cite[Theorem 1.5]{Nazarov-94},
while Kovrijkine adapted the estimate to polynomial coefficients, see Lemma~3 in \cite{Kovrijkine-thesis} and \cite{Kovrijkine-01}.
We do not give a proof but refer the reader to \cite{Nazarov-94} and \cite{Kovrijkine-thesis,Kovrijkine-01}.

In contrast to Theorem \ref{lemma:2}, Theorem \ref{lemma:3} considers a more restrictive class of functions.
On the other hand, Theorem \ref{lemma:3} does not need the sup norm of the exponential polynomial on a complex disc as a-priori information, but only its degree.

The following lemma will be recalled both in the proof of Theorem \ref{thm:1} and of Theorem \ref{thm:2}.
It casts in a quantitative form the idea that a non-negative function $|f|$ can be substantially smaller than its average only on a set of small measure.

\begin{lemma}\label{lemma:level-set-argument}
Let $U\subset\Lambda\subset\RR^d$ be  measurable sets with $\abs{\Lambda}=1$ and $\abs{U}>0$.
Let $p\in [1,\infty]$, $f\in L^p(\Lambda)$,  $C\in [1,\infty)$, $\alpha\in (0,\infty)$,
and  $Q\in [0,\infty)$.
For $\epsilon=\frac{C}{1+C}\abs{U}$ let
\begin{equation} \label{eq:level-set-1}
{W}=\{x\in \Lambda\;\vert\;\abs{f(x)}+Q<\left(\frac{\epsilon}{C}\right)^\alpha\norm{f}{L^p(\Lambda)}\}.
\end{equation}
Assume that
\begin{equation}\label{eq:assumption}
\sup_{x\in U}\abs{f(x)}+Q\geq\left(\frac{\abs{U}}{C}\right)^\alpha\norm{f}{L^p(\Lambda)},
\end{equation}
and
\begin{equation}\label{eq:assumption1}
\sup_{x\in {W}}\abs{f(x)}+Q\geq\left(\frac{\abs{{W}}}{C}\right)^\alpha\norm{f}{L^p(\Lambda)}.
\end{equation}
Then, $\abs{{W}}\leq \epsilon$ and
\begin{equation}\label{eq:level-set-2}
\norm{\abs{f}+Q}{L^{q}(U)}^q\geq\left(\frac{\abs{U}}{1+C}\right)^{q\alpha+1}\norm{f}{L^{q}(\Lambda)}^q,
\end{equation}
for any $1\leq q\leq p$ and $q<\infty$.
\end{lemma}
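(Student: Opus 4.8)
The plan is to run a standard level-set (layer-cake) argument. The set $W$ in~\eqref{eq:level-set-1} collects the points of $\Lambda$ at which $\abs{f}+Q$ is small; the idea is that~\eqref{eq:assumption1}, together with the very definition of $W$, forces $W$ to be small, after which a pointwise lower bound for $(\abs{f}+Q)^q$ on the large remaining set $U\setminus W$ yields~\eqref{eq:level-set-2}. Throughout I will read $\norm{f+Q}{L^q(U)}^q$ as $\int_U(\abs{f(x)}+Q)^q\dd x$, consistently with the way $\abs{f}+Q$ enters~\eqref{eq:level-set-1}--\eqref{eq:assumption1}, and I will dispose of the case $\norm{f}{L^p(\Lambda)}=0$ first, where the right-hand side of~\eqref{eq:level-set-2} vanishes, $W$ is a null set, and there is nothing to prove. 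So from now on $\norm{f}{L^p(\Lambda)}>0$.

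The first step is to bound $\abs{W}$. By the defining inequality in~\eqref{eq:level-set-1}, every $x\in W$ satisfies $\abs{f(x)}+Q<(\epsilon/C)^\alpha\norm{f}{L^p(\Lambda)}$, hence $\sup_{x\in W}\abs{f(x)}+Q\leq(\epsilon/C)^\alpha\norm{f}{L^p(\Lambda)}$. Feeding this into hypothesis~\eqref{eq:assumption1}, cancelling $\norm{f}{L^p(\Lambda)}>0$, and using that $t\mapsto t^\alpha$ is increasing on $[0,\infty)$ (as $\alpha\geq1$), I get $\abs{W}\leq\epsilon$, which is the first assertion. Since $\epsilon=\tfrac{C}{1+C}\abs{U}$, this gives $\abs{U\setminus W}\geq\abs{U}-\abs{W}\geq\abs{U}-\epsilon=\tfrac{1}{1+C}\abs{U}$.

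The second step is the integral estimate. On $\Lambda\setminus W$, and in particular on $U\setminus W$, negating the defining condition of $W$ yields the pointwise bound $\abs{f(x)}+Q\geq(\epsilon/C)^\alpha\norm{f}{L^p(\Lambda)}=\big(\tfrac{\abs{U}}{1+C}\big)^\alpha\norm{f}{L^p(\Lambda)}$, where I used $\epsilon/C=\tfrac{\abs{U}}{1+C}$. Raising to the $q$-th power ($1\leq q<\infty$) and integrating over $U\setminus W\subset U$,
\begin{equation*}
\int_U(\abs{f(x)}+Q)^q\dd x\;\geq\;\abs{U\setminus W}\,\Big(\tfrac{\abs{U}}{1+C}\Big)^{q\alpha}\norm{f}{L^p(\Lambda)}^q\;\geq\;\Big(\tfrac{\abs{U}}{1+C}\Big)^{q\alpha+1}\norm{f}{L^p(\Lambda)}^q,
\end{equation*}
and since $\abs{\Lambda}=1$ and $q\leq p$ I may replace $\norm{f}{L^p(\Lambda)}$ by the smaller $\norm{f}{L^q(\Lambda)}$ (Hölder's inequality, with the obvious reading when $p=\infty$), which is exactly~\eqref{eq:level-set-2}.

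I do not anticipate a genuine obstacle here; this is a short, elementary argument. The points that call for a little care are the passage from the strict inequality defining $W$ to the non-strict bound on $\sup_{x\in W}\abs{f}$, the bookkeeping of the constants ($\epsilon/C=\abs{U}/(1+C)$ and $\abs{U}-\epsilon=\abs{U}/(1+C)$, which is what makes the exponents $q\alpha$ and $+1$ assemble correctly), and the degenerate case $\norm{f}{L^p(\Lambda)}=0$. I note that only~\eqref{eq:assumption1} and the definition of $W$ are used to control $\abs{W}$ above; hypothesis~\eqref{eq:assumption} is the companion condition that, in the later applications of the lemma via Theorems~\ref{lemma:2} and~\ref{lemma:3}, guarantees that $U$ is a worthwhile set to feed into the lemma.
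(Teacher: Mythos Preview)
Your proof is correct and follows essentially the same approach as the paper's own: bound $\abs{W}$ by combining the defining inequality of $W$ with hypothesis~\eqref{eq:assumption1}, then integrate the resulting pointwise lower bound over $U\setminus W$ and pass from $L^p$ to $L^q$ via H\"older on the unit-measure set $\Lambda$. Your treatment is slightly more careful than the paper's in explicitly handling the degenerate case $\norm{f}{L^p(\Lambda)}=0$, and your closing remark that hypothesis~\eqref{eq:assumption} is not actually used in the proof of the lemma itself (only in its applications) is a correct and useful observation.
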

\begin{proof}
We first observe that if $\abs{{W}}=0$, then $\abs{{W}}\leq\epsilon$ is trivially satisfied. Hence, let $\abs{{W}}>0 $.
Similarly, if $\sup_{x\in {W}}\abs{f(x)}+Q=0$ then \eqref{eq:assumption1} implies $\norm{f}{L^p(\Lambda)}=0$ and \eqref{eq:level-set-2} holds trivially.
Thus, assume  $\sup_{x\in {W}}\abs{f(x)}+Q>0$.
Then, by the definition and assumption on ${W}$ we have
\begin{equation*}\label{eq:eq:32}
\begin{split}
\sup_{x\in {W}}\abs{f(x)}+Q
\leq\left(\frac{\epsilon}{C}\right)^\alpha\norm{f}{L^p(\Lambda)}
&\leq \left(\frac{\epsilon}{C}\right)^\alpha\left(\frac{C}{\abs{{W}}}\right)^\alpha\left(\sup_{x\in {W}}\abs{f(x)}+Q\right)\\
&\leq\left(\frac{\epsilon}{\abs{{W}}}\right)^\alpha\left(\sup_{x\in {W}}\abs{f(x)}+Q\right),
\end{split}
\end{equation*}
and it follows that $\frac{\epsilon}{\abs{{W}}}\geq 1$.

Let ${U\setminus W}=\{x\in U\;\vert\;\abs{f(x)}+Q\geq\left(\frac{\epsilon}{C}\right)^\alpha\norm{f}{L^p(\Lambda)}\}$
be the complement of ${W}$ in $U$. Then $\abs{{U\setminus W}}\geq\abs{U}-\epsilon \geq(1+C)^{-1}\abs{U}$,
since  $\epsilon=\frac{C}{1+C}\abs{U}$.
Now
\begin{equation*}
\begin{split}
\int_{U}(\abs{f(x)}+Q)^{q}
&\geq\int_{{U\setminus W}}(\abs{f(x)}+Q)^{q}
\geq\left(\frac{\abs{U}}{1+C}\right) \left(\frac{\abs{U}}{1+C}\right)^{q\alpha}\norm{f}{L^p(\Lambda)}^{q}\\
&\geq\left(\frac{\abs{U}}{1+C}\right)^{q\alpha+1}\norm{f}{L^{q}(\Lambda)}^{q},
\end{split}
\end{equation*}
where we used H\"older's inequality  $\norm{f}{L^q(\Lambda)} \leq  \abs{\Lambda}^{1/q-1/p} \norm{f}{L^p(\Lambda)}=\norm{f}{L^p(\Lambda)}$,
since $\Lambda$  has measure one.
\end{proof}

We now recall the Bernstein Inequality for periodic functions (see \cite[Proposition 1.11]{MuscaloS-13} and \cite[Theorem 11.3.3]{Boas-54}),
which will be used in both proofs.

\begin{proposition}[Bernstein's Inequality]\label{prop:bernstein}
Let $d\geq 1$, $p\in [1,\infty]$, and $f\in L^p(\TT_L^d)$ such that $\supp\h{f}\subset [-b_1,b_1]\times\ldots\times[-b_d,b_d]$.
Set $b=(b_1,\ldots,b_d)$, then
\begin{equation}\label{eq:bernstein}
\norm{\partial^{\alpha}f}{L^p(\TT_L^d)}\leq C_B^{\abs{\alpha}}b^{\alpha}\norm{f}{L^p(\TT_L^d)}
\quad \forall \alpha=(\alpha_1, \ldots, \alpha_d)\in\NN_0^d.
\end{equation}
Here $1\leq C_B\in\RR$ is a numerical constant.% which can be chose equal to $1$. 
\end{proposition}

The value of the numerical constant $C_B$ depends on the technique used in the proof. 
In \cite[Theorem 11.3.3]{Boas-54} the author writes $\partial^\alpha f$ as a convex combination
of particular values of the function itself and uses the convexity of the function $t\mapsto t^p$, 
$p\geq 1$, obtaining $C_B=1$, while in \cite[Proposition 1.11]{MuscaloS-13} the authors use 
a convolution method and apply Young's Inequality, leading to a value of $C_B$ much larger than one.

The following lemma is used in the proof of Theorem \ref{thm:sufficiency-necessity} and can be again considered as a Bernstein-type bound.

\begin{lemma}\label{lem:L-infinity-L-p-estimate}
Let $p\in[1,\infty]$ with dual exponent $q$,  $f\in L^p(\TT_L^d)$ such that $\supp\widehat{f}\subset J$, where $J$ is a
 $d$-dimensional rectangle with sidelengths $b_1,\ldots, b_d$.
Then,
\begin{equation}\label{eq:L-infinity-L-p-estimate}
 \norm{f}{L^\infty(\TT_L^d)}
 \leq
 (2\pi)^{d/q} \prod_{j=1}^d \left(\frac{2L_j b_j +1}{L_j^{1/p}}\right)   \norm{f}{L^p(\TT_L^d)},
\end{equation}
\end{lemma}

\begin{proof}
Without loss of generality, we may assume that $J$ is centred at zero. Indeed, if this is not the case, we shift $J$ by
$c\in\left(\left(\frac{1}{L_1}\ZZ\right)\times\ldots\times\left(\frac{1}{L_d}\ZZ\right)\right)\cap J$ so that
$J-c\subset \tilde{J}:=[-b_1,b_1]\times\ldots\times[-b_d,b_d]$. This affects $f$ only by multiplication with the factor $e^{i c\cdot x}$
and does not change its $L^p$-norm.

We first treat the case $L=(1,\ldots,1)$. Let $k\mapsto \chi_{\tilde{J}\cap \ZZ^d}\left(k\right)$ be the characteristic function of $\tilde{J}\cap \ZZ^d$,
and let $h:\TT^d\rightarrow\CC$ such that $\widehat{h}=\chi_{\tilde{J}\cap \ZZ^d}$.
Then $h\in L^q(\TT^d)$ for $1=\frac{1}{q}+\frac{1}{p}$. The triangle inequality gives
\[
\norm{h}{L^q(\TT^d)}
\leq
\sum_{k\in \tilde{J}\cap \ZZ^d} \norm{ \mathcal{F}^{-1} \chi_{\{k\}} }{L^q(\TT^d)}
\leq
(2\pi)^{d/q} \left(\prod_{j=1}^d(2b_j+1)\right)
\]
where we denote by $\mathcal{F}^{-1}$ the inverse Fourier transform.
Since $\widehat{f} = \widehat{h} \cdot \widehat{f}$, by Fourier inversion we obtain $f= h* f$ and by Young's Inequality \cite[Lemma 1.1~(ii)]{MuscaloS-13}
\begin{equation}\label{eq:estimate-1}
 \norm{f}{L^\infty(\TT^d)}
 \leq
 \norm{h}{L^q(\TT^d)} \norm{f}{L^p(\TT^d)}
 =
 (2\pi)^{d/q}  \left(\prod_{j=1}^d(2b_j+1)\right)\norm{f}{L^p(\TT^d)}.
\end{equation}
Let now $L\in\RR^d_+$ and $f\in L^p(\TT_L^d)$ with $\supp\widehat{f}\subset \tilde{J}$.
Scaling by $L_j$ in each coordinate, we obtain the function $g(x)=f(L_1 x_1,\ldots, L_d x_d)$, $x\in\TT^d$, for which
we have $\supp\widehat{g}\subset [-L_1 b_1,L_1 b_1]\times\ldots\times[-L_d b_d,L_d b_d]$,
$\norm{g}{L^\infty(\TT^d)}=\norm{f}{L^\infty(\TT_L^d)}$,
and $\norm{g}{L^p(\TT^d)}=\left(\prod_{j=1}^d L_j\right)^{-1/p} \norm{f}{L^p(\TT_L^d)}$. Then, by \eqref{eq:estimate-1}
\begin{align*}
 \norm{f}{L^\infty(\TT_L^d)} & =
 \norm{g}{L^\infty(\TT^d)} \\
 &\leq
  (2\pi)^{d/q} \prod_{j=1}^d \left(2L_j b_j +1\right) \norm{g}{L^1(\TT^d)}
 =
  (2\pi)^{d/q} \prod_{j=1}^d \frac{2L_j b_j +1}{L_j^{1/p}}  \norm{f}{L^p(\TT_L^d)},
\end{align*}
which proves the claim.
\end{proof}

Finally, we present a statement inspired by a claim in the proof of \cite[Theorem 2']{Kovrijkine-01}.
In what follows
we consider  $d$-dimensional rectangles $J_1,\ldots,J_n$ of the form
\begin{equation}\label{eq:parallelepipeds}
J_l =\left[\lambda_{l,1}- \frac{b_1}{2},\lambda_{l,1}+ \frac{b_1}{2}\right] \times\ldots\times
\left [\lambda_{l,d}- \frac{b_d}{2},\lambda_{l,d}+ \frac{b_d}{2}\right] \quad
\end{equation}
for all $l= 1, \ldots, n$. The $d$-dimensional rectangles $J_1,\ldots,J_n$ will be called \emph{disjoint}, if
\[
\forall \, l,k\in\{1,\ldots,n\}, l\neq k, \, \exists  \, j\in\{1,\ldots,d\}: \ |\lambda_{l,j}-\lambda_{k,j}|>2b_j
\]
\begin{lemma}\label{lemma:sum-estimate}
Let the  $d$-dimensional rectangles $J_1,\ldots,J_n\subset \RR^d $ be disjoint.
Let $f\in L^p(\TT_L^d)$ for $p\in[1,\infty]$ be such that $\supp\h{f}\subset J_1\cup\ldots\cup J_n$
 Then,
$f(x)=\sum_{l=1}^n f_l(x)e^{i c_l\cdot x}$,
where each $f_l$ satisfies $\supp\h{f_l}\subset  J_l-c_l\subset [-b_1,b_1]\times\ldots\times [-b_d,b_d]$, with
$c_l\in \left(\frac{1}{L_1}\ZZ\times\ldots\times\frac{1}{L_d}\ZZ\right)\cap J_l$, and
\begin{equation}\label{eq:sum-estimate}
\norm{f_l}{L^p(\TT_L^d)}\leq K^d\norm{f}{L^p(\TT_L^d)},
\end{equation}
where $K$ is a numerical constant.
It can be chosen equal to $6\pi$.
\end{lemma}

\begin{proof}
The first part of the lemma follows by properties of the Fourier series. In fact, we perform a shift (in Fourier space) of each
 $d$-dimensional rectangle
$J_l$ by a point $c_l\in\left(\frac{1}{L_1}\ZZ\times\ldots\times\frac{1}{L_d}\ZZ\right)\cap J_l$ so that $J_l-c_l\subset [-b_1,b_1]\times\ldots,\times[-b_d,b_d]$.
Consequently, we obtain $\h{f}(\frac{1}{L}k)=\sum_{l=1}^n\h{f}_l(\frac{1}{L}k-c_l)$,
where $f_l$'s are functions whose Fourier coefficients are contained in $[-b_1,b_1]\times\ldots\times[-b_d,b_d]$.
Thus, $f(x)=\sum_{l=1}^n f_l(x)e^{i c_l\cdot x}$.

To prove the second part, we first assume that $L=(1,\ldots, 1)$. %Then, we have $c_l\in\ZZ^d$.
Let $h:\TT^d\rightarrow\CC$ such that $\h{h}(k)=\chi_{\{-1,0,1\}^d}(k)$.
It is easy to check that $h\in L^1$ and $\norm{h}{L^1(\TT^d)}\leq (6\pi)^d$.
We also set $\phi(x):= h(b_1x_1,\ldots,b_dx_d)$, defined on $[0,2\pi/b_1]\times\ldots\times [0,2\pi/b_d]$ and such that $\h{\phi}(b_1k_1,\ldots,k_db_d)=\h{h}(k)$.
Consequently $\supp\h{\phi}\subset [-b_1,b_1]\times\ldots\times [-b_d,b_d]$. Then,
\begin{equation}\label{eq:eq:7}
\h{f_l}(k)=\h{f}(k+c_l)\h{\phi}(b_1k_1,\ldots,b_dk_d)=\reallywidehat{e^{-ic_l\cdot x}f\ast \phi(\cdot/b)}(k)=\reallywidehat{e^{-ic_l\cdot x}f\ast h}(k),
\end{equation}
and by Young's Inequality \cite[Lemma 1.1~(ii)]{MuscaloS-13} we conclude
\begin{equation}\label{eq:L=1}
\norm{f_l}{L^p(\TT^d)}\leq\norm{h}{L^1(\TT^d)}\norm{f}{L^p(\TT^d)}\leq (6\pi)^d \norm{f}{L^p(\TT^d)}.
\end{equation}

If $L\neq (1,\ldots,1)$, let $T:\TT^d\longrightarrow\TT_L^d$, $T(x)=(L_1 x_1,\ldots , L_d x_d)$
to obtain the function
\begin{equation}\label{eq:eq:29}
g(x)=(f\circ T)(x)=\sum_{l=1}^n (f_l\circ T)(x)e^{i c_l\cdot T(x)}:=\sum_{l=1}^n g_l(x)e^{i c_l\cdot T(x)}.
\end{equation}
Let first $p\in[1,\infty)$. For the $L^p$-norms of $g$ and $g_l$ we have
\begin{equation}\label{eq:eq:8}
\norm{f}{L^p(\TT_L^d)}=\Big(\prod_{j=1}^d L_j\Big)^{-1/p}\norm{g}{L^p(\TT^d)}
\,\text{ and }\,
\norm{f_l}{L^p(\TT_L^d)}=\Big(\prod_{j=1}^d L_j\Big)^{-1/p}\norm{g_l}{L^p(\TT^d)}.
\end{equation}
Then, using \eqref{eq:L=1} we obtain
\begin{equation}\label{eq:eq:9}
 \begin{split}
\norm{f_l}{L^p(\TT_L^d)}& =\Big(\prod_{j=1}^d L_j\Big)^{-1/p}\norm{g_l}{L^p(\TT^d)}\\
&\leq (6\pi)^d \Big(\prod_{j=1}^d L_j\Big)^{-1/p}\norm{g}{L^p(\TT^d)}
\leq (6\pi)^d \norm{f}{L^p(\TT_L^d)}.
\end{split}
\end{equation}
For $p=\infty$, we have $\norm{f_l}{L^\infty(\TT_L^d)}=\norm{g_l}{L^\infty(\TT^d)}$
and $\norm{f}{L^\infty(\TT_L^d)}=\norm{g}{L^\infty(\TT^d)}$.
Then the claim follows similarly as in \eqref{eq:eq:9}, and the proof is concluded.
\end{proof}

\section{Proof of Theorem \ref{thm:1}}\label{s:proofLS}

The strategy of the proof consists of two parts.
First one splits $\TT_L^d$ into unit cubes. (Here and in the following unit cubes will always mean \emph{unit $d$-dimensional cubes}.)
This is necessary since the diameter of $\TT_L^d$ depends on $L$
while the one of the unit cube is simply $\sqrt{d}$, enabling  an application of Theorem \ref{lemma:2} \emph{without}
$L$-scaling. Furthermore, the Lebesgue measure on the unit cube is normalized, which  allows the use of a probabilistic trick.
In fact, the idea used several times is that an estimate on the integral over the unit cube, i.\,e.~the average, of a function $ \rho\geq 0$
implies a point-wise estimate for the function values, on a set which is not too small with respect to the induced measure with density $\rho$.
We have encountered a variant of this idea in Lemma \ref{lemma:level-set-argument}.

In the second part one identifies a sufficiently rich class of cubes,
such that on each one of them it is possible to estimate the maximum of $\vert f \vert$.

We assume that $J$ is centred at $\lambda=(\lambda_1,\ldots,\lambda_d)$, i.e.
\begin{equation}\label{eq:eq:30}
J=[\lambda_1-b_1/2,\lambda_1+b_1/2]\times\ldots\times[\lambda_d-b_d/2,\lambda_d+b_d/2],
\end{equation}
and we consider $c\in\left(\frac{1}{L_1}\ZZ\times\ldots\times\frac{1}{L_d}\ZZ\right)\cap J$ so that $J-c\subset [-b_1,b_1]\times\ldots\times[b_d,b_d]$.
This shift (in Fourier space) affects $f$ only by multiplication with the factor $e^{i c\cdot x}$
and does not change its $L^p$-norm. Hence, without loss of generality we can assume that $\widehat{f}$ has support in $[-b_1,b_1]\times\ldots\times[-b_d,b_d]$.
Moreover, we first assume $p\in[1,\infty)$, $a=(1, \ldots, 1)$ and $2\pi L_j\geq 1$ for all $j=1,\ldots,d$.
We will then recover the general estimate by a scaling argument in $a$.

Let $\hat\Gamma=\ZZ^d\cap ([0,\lfloor 2\pi L_1\rfloor -1]\times\ldots\times[0,\lfloor 2\pi L_d\rfloor -1])$ and 
$\tilde\Gamma=\ZZ^d\cap ([0,\lceil 2\pi L_1\rceil -1]\times\ldots\times[0,\lceil 2\pi L_d\rceil -1])$ so that
\begin{equation}\label{eq:inclusions}
\bigcup_{j\in\hat\Gamma}\left([0,1]^d+j\right)\subset \TT_L^d \subset \bigcup_{j\in\tilde\Gamma}\left([0,1]^d+j\right). \\
\end{equation}
In general the set on the right is larger than $\TT_L^d$ (consider e.\,g.~$d=1$ and $2 \pi L_1 = 1+\epsilon$). For this reason the factor $2^d$ appears in 
\begin{equation*}
\sum_{j\in \tilde\Gamma}\norm{f}{L^p([0,1]^d+j)}^p \leq 2^d\norm{f}{L^p(\TT_L^d)}^p.
\end{equation*}

Note that if $\abs{f}$ is constant one use the first inclusion in 
\eqref{eq:inclusions} to conclude for all  $p \in [1,\infty)$
\[
 \int_{\TT_L^d} \abs{f}^p \leq \frac{2^d}{\gamma}\int_{S\cap\TT_L^d} \abs{f}^p. 
\]

In what follows we will exclude the case of constant $\abs{f}$  and will denote any of the cubes $[0,1]^d+j$ by $\Lambda$.

We now claim that for any point $y$ in a cube $\Lambda$ there exists a line segment $I:=I(\Lambda, S,y)\subset \Lambda$ such that $y\in I$
and $\frac{\abs{S\cap I}}{\abs{I}}\geq \frac{\gamma}{C_1^d}$. In fact, using spherical coordinates around $y$ we want to sweep the set $S \cap \Lambda$ by line segments starting at $y$.
To implement this we write
\begin{equation}\label{eq:eq:13}
  \abs{S\cap \Lambda}=\int_{\Lambda\cap S} \dd x=\int_{\abs{\xi}=1}\int_{0}^{\infty}\chi_{S\cap \Lambda}(y+r\xi)r^{d-1}\dd r \dd\sigma(\xi).
\end{equation}

We set $\sigma_{d-1}=\abs{\mathbb{S}^{d-1}}$. 
Then, there exists a point $\eta= \eta(S)\in\mathbb{S}^{d-1}$ such that
\begin{equation}\label{eq:*}
\abs{S\cap \Lambda}\leq \sigma_{d-1}\int_0^{\infty}\chi_{S\cap \Lambda}(y+r\eta)r^{d-1}\dd r,
\end{equation}
since the maximum cannot be smaller than the average.

We define the line segment $I$ to be the longest line segment in $\Lambda$ starting at $y$ in the direction $\eta=\eta(S)$, that is,
\begin{equation}\label{eq:line-segment-2}
I=I(\Lambda, S,y, \eta)= \{x\in \Lambda\quad\vert\quad x=y+r\eta,\quad r\geq 0\}.
\end{equation}

Consequently, Ineq. \eqref{eq:*} yields
\begin{equation}\label{eq:eq:14}
\abs{S\cap \Lambda}
\leq \sigma_{d-1} d^{(d-1)/2}\int_0^{\infty}\chi_{S\cap I}(y+r\eta)\dd r
=\sigma_{d-1} d^{(d-1)/2}\abs{S\cap I}.
\end{equation}

Since 
\[
\sigma_{d-1}=2^{-1}\pi^{\frac{d}{2}-1}e^{\frac{d}{2}+\frac{1}{2}}\left(\frac{d+1}{2}\right)^{-d/2}\left(1+O(\frac{2}{d+1})\right) 
\]
using the asymptotic approximation of the Gamma function, 
there exists a numerical constant $C_1>1$ such that $\sigma_{d-1}d^{d/2}\leq C_1^d$. 
This fact, together with the above inequality \eqref{eq:eq:14} and $\abs{I}\leq d^{1/2}$, yields
\begin{equation}\label{eq:line-segment}
\frac{\abs{S\cap I}}{\abs{I}}
\geq \frac{\abs{S\cap \Lambda}}{\sigma_{d-1}d^{d/2}}
\geq \frac{\abs{S\cap \Lambda}}{C_1^d}
\geq\frac{\gamma}{C_1^d}.
\end{equation}
%where $C_1$ is a numerical constant.

Let $y_0\in \Lambda$
be a point such that $\abs{f(y_0)}\geq\norm{f}{L^p(\Lambda)}$, e.g.~a maximum of $f$ in $\Lambda$, and define $F_\eta\colon \CC\longrightarrow\CC$
by $F_\eta(w):=\norm{f}{L^p(\Lambda)}^{-1}f(y_0+w\abs{I_0}\eta)$, where $I_0:=I(\Lambda, S,y_0, \eta)$ is as in \eqref{eq:line-segment-2}.

Setting $D(\xi, R)=\{\nu\in\CC\;\vert\;\abs{\nu-\xi}\leq R\}$ for $\xi\in\CC$ and 
$D:= D(0, 4)\times\ldots\times D(0, 4)$, we observe 
\begin{equation}\label{eq:max_F}
 M_\eta:=\max_{\abs{w}\leq 4} \abs{F_\eta(w)}
 \leq \max_{\xi\in \mathbb{S}^{d-1}}\max_{\abs{w}\leq 4} \abs{F_\xi(w)}
 \leq 
 \max_{z \in D}  \frac{\abs{f(y_0+z)}}{\norm{f}{L^p(\Lambda)}}=:M, 
\end{equation}
since $\abs{\abs{I}\eta_i}\leq 1$ for all $i=1,\ldots, d$, for all $\eta\in\mathbb{S}^{d-1}$ 
and all $I\subset \Lambda$ line segments. 
Note that $\log M >0$ since we assumed that $\abs{f}$ is not constant. 

By definition of $F_\eta$ and for $A:=\{t\in[0,1]\;\vert\;y_0+t\abs{I_0}\eta\in S\cap I_0\}$ we have
\begin{equation}\label{eq:eq:15a}
\sup_{x\in S\cap \Lambda}\abs{f(x)}\geq\sup_{x\in S\cap I_0}\abs{f(x)}=\norm{f}{L^p(\Lambda)}\sup_{t\in A}\abs{F_\eta(t)}.
\end{equation}
We now apply Theorem \ref{lemma:2} to the function $F_\eta$, the interval $[0,1],$ and the subset $A$, 
whose measure is $\abs{A}=\frac{\abs{S\cap I_0}}{\abs{I_0}}$:
\begin{equation}\label{eq:eq:15b}
\norm{f}{L^p(\Lambda)}\sup_{t\in A}\abs{F_\eta(t)}
\geq\norm{f}{L^p(\Lambda)}\Big(\frac{\abs{A}}{12}\Big)^{\frac{2\log M_\eta}{\log 2}}\sup_{t\in [0,1]}\abs{F_\eta(t)}.
\end{equation}
The last term is bounded below by
\begin{equation}\label{eq:eq:15c}
\Big(\frac{\abs{A}}{12}\Big)^{\frac{2\log M}{\log 2}}\abs{f(y_0)}\\
\geq\Big(\frac{\abs{S\cap I_0}}{12\abs{I_0}}\Big)^{\frac{2\log M}{\log 2}}\norm{f}{L^p(\Lambda)}\geq\Big(\frac{\abs{S\cap \Lambda}}{C_2^d}\Big)^{\frac{2\log M}{\log 2}}\norm{f}{L^p(\Lambda)},
\end{equation}
where we used the choice of $y_0$, inequalities \eqref{eq:line-segment} and \eqref{eq:max_F}, 
and where $C_2$ is a numerical constant such that $C_2^d \geq 12 C_1^d$. 
Similarly, for
\[
{W}=\{x\in \Lambda\;\vert\;\abs{f(x)}<\left(\frac{\abs{S \cap \Lambda}}{1+C_2^d}\right)^{2\log M/\log 2}\norm{f}{L^p(\Lambda)}\}
\]
and the point $y_0$ chosen above, we proceed as in \eqref{eq:eq:13}--\eqref{eq:line-segment} 
substituting $S$ with $W$ to find a (new) direction $\tilde{\eta}=\eta(W)$  
and a corresponding line segment 
$\tilde I_0:=I(\Lambda, W, y_0, \tilde{\eta})$ containing $y_0$ and  
satisfying $\frac{\abs{W\cap \tilde I_0}}{\abs{\tilde I_0}}\geq \frac{\abs{W}}{C_1^d}$. 
Defining the modified function $\tilde F_{\tilde{\eta}}:\CC \rightarrow \CC$, 
$\tilde F_{\tilde{\eta}}(w)=\norm{f}{L^p(\Lambda)}^{-1}f(y_0+w\abs{\tilde I_0}\tilde\eta)$ and applying Theorem \ref{lemma:2} 
to the function $\tilde F_{\tilde{\eta}}$ with $M_{\tilde{\eta}}=\max_{\abs{w}\leq 4}\abs{\tilde F_{\tilde\eta}(w)}\leq M$, 
the set $[0,1]$, and $\tilde A:=\{t\in[0,1]\;\vert\;y_0+t\abs{\tilde I_0}\tilde\eta\in W\cap \tilde I_0\}$ 
satisfying $\abs{\tilde A}=\frac{\abs{W\cap \tilde I_0}}{\abs{\tilde I_0}}$, we follow the same steps of \eqref{eq:eq:15a}--\eqref{eq:eq:15c} to conclude
\begin{equation}\label{eq:eq:15'}
\begin{split}
\sup_{x\in W}\abs{f(x)}
&\geq\norm{f}{L^p(\Lambda)}\Big(\frac{\abs{\tilde A}}{12}\Big)^{\frac{2\log M_{\tilde\eta}}{\log 2}}\sup_{t\in [0,1]}\abs{\tilde F_{\tilde\eta}(t)}
\geq\Big(\frac{\abs{\tilde A}}{12}\Big)^{\frac{2\log M}{\log 2}}\abs{f(y_0)}\\
&\geq\Big(\frac{\abs{W\cap \tilde I_0}}{12\abs{\tilde I_0}}\Big)^{\frac{2\log M}{\log 2}}\norm{f}{L^p(\Lambda)}\geq\Big(\frac{\abs{W}}{C_2^d}\Big)^{\frac{2\log M}{\log 2}}\norm{f}{L^p(\Lambda)},
\end{split}
\end{equation}

% Unless $f$ is a constant function we have $\log M>0$.
Lemma \ref{lemma:level-set-argument} applied with $Q=0$, $U=\Lambda\cap S$ and $\alpha=2\log M/\log 2>0$ gives
\begin{equation}\label{eq:1}
\begin{split}
\norm{f}{L^p(S\cap \Lambda)}&\geq\left(\frac{\abs{\Lambda\cap S}}{1+C_2^d}
\right)^{\frac{2\log M}{\log 2}+\frac{1}{p}}\norm{f}{L^p(\Lambda)}
\geq\left(\frac{\gamma}{C_3^d}\right)^{\frac{2\log M}{\log 2}+\frac{1}{p}}\norm{f}{L^p(\Lambda)},
\end{split}
\end{equation}
for some numerical constant $C_3$ so that $1+C_2^d\leq C_3^d$.

One is now left with estimating $M=
\max_{z \in D}  \frac{\abs{f(y_0+z)}}{\norm{f}{L^p(\Lambda)}}$,
which depends on the particular cube $\Lambda=[0,1]^d+j$ under consideration.
We do not know how to estimate this quantity for \emph{every} cube $\Lambda$, but for sufficiently many of them.
To make this precise, we consider two types of cubes.

Motivated by the Bernstein Inequality (Proposition \ref{prop:bernstein}), 
we follow Kovrijkine \cite[p.~6]{Kovrijkine-thesis} and call a cube $\Lambda$ \emph{good} if
for all multi-indices $\alpha\in\NN_0^d\setminus\{0\}$
\begin{equation}\label{eq:bad}
\norm{\partial^{\alpha}f}{L^p(\Lambda)}< 2^{(2d)/p}(3C_B b)^{\alpha}\norm{f}{L^p(\Lambda)},
\end{equation}
where $C_B$ is the constant from \eqref{eq:bernstein}. Otherwise, we call $\Lambda$ \emph{bad}.

It is now easy to check that
\begin{equation}\label{eq:bad-mass}
\norm{f}{L^p(\bigcup\limits_{\Lambda\text{ bad}}\Lambda)}^p\leq\frac{1}{2}\norm{f}{L^p(\TT_L^d)}^p,
\end{equation}
and therefore good cubes exist and the contribution of the bad cubes can be subsumed in the contribution of the good ones.

Indeed, using the definition of bad cubes and the Bernstein Inequality, we have
\begin{multline}\label{eq:eq:10}
\norm{f}{L^p(\bigcup\limits_{\Lambda\text{ bad}}\Lambda)}^p
\leq
\sum_{\alpha\neq 0}\sum_{\Lambda\text{ bad}}\frac{1}{2^{2d}(3C_Bb)^{p\alpha}}\norm{\partial^\alpha f}{L^p(\Lambda)}^p
\\
\leq\sum_{\alpha\neq 0}\frac{2^d}{2^{2d}(3C_Bb)^{p\alpha}}\norm{\partial^\alpha f}{L^p(\TT_L^d)}^p
\leq
\sum_{\alpha\neq 0}\frac{1}{2^{d}3^{p\abs{\alpha}}}\norm{f}{L^p(\TT_L^d)}^p\\
=
\frac{1}{2^d}\left(\frac{1}{\left(1-\frac{1}{3^p}\right)^d}-1\right)\norm{f}{L^p(\TT_L^d)}^p
< 
\frac{1}{2}\norm{f}{L^p(\TT_L^d)}^p,
\end{multline}
which gives Ineq. \eqref{eq:bad-mass}.

We now claim that in any good cube $\Lambda$ there exists a point $x\in \Lambda$ such that
\begin{equation}\label{eq:claim}
\abs{\partial^\alpha f(x)}\leq {2^{(3d)/p}}(9C_Bb)^{\alpha}\norm{f}{L^p(\Lambda)}\qquad\forall\;\alpha\in\NN_0^d.
\end{equation}
The proof follows from a contradiction argument. We assume that for every $x\in\Lambda$, with $\Lambda$ a good cube,
there exists $\alpha(x)\in\NN_0^d$ such that
\begin{equation}
\abs{\partial^{\alpha(x)} f(x)}^p> 2^{3d} (9C_Bb)^{p\alpha(x)} \norm{f}{L^p(\Lambda)}^p
\end{equation}
To get rid of the $x$-dependence in $\alpha(x)$ we sum over all multi-indices to obtain          
\begin{equation*}
\sum_{\alpha\in\NN_0^d}  \frac{\abs{\partial^\alpha f(x)}^p}{ 2^{(3d)} (9C_Bb)^{p\alpha} }
\geq
\frac{\abs{\partial^{\alpha(x)} f(x)}^p}{ 2^{(3d)} (9C_Bb)^{p\alpha(x)} }
> \norm{f}{L^p(\Lambda)}^p.
\end{equation*}
Integration over $\Lambda$ and the definition of good cubes yield
\begin{equation}\label{eq:eq:11}
\begin{split}
2^{d}\norm{f}{L^p(\Lambda)}^p&\leq\sum_{\alpha\in\NN^d_0}\frac{1}{2^{2d}(9C_Bb)^{p\alpha}}\norm{\partial^\alpha f}{L^p(\Lambda)}^p\\
&\leq\sum_{\alpha\in\NN^d_0}\Big(\frac{1}{3}\Big)^{p\abs{\alpha}}\norm{f}{L^p(\Lambda)}^p
=\frac{1}{\Big(1-(1/3)^p\Big)^d}\norm{f}{L^p(\Lambda)}^p.
\end{split}
\end{equation}
Then,
\begin{equation}\label{eq:eq:12}
\norm{f}{L^p(\Lambda)}^p\leq\frac{1}{2^{d}\Big(1-(1/3)^p\Big)^d}\norm{f}{L^p(\Lambda)}^p < \norm{f}{L^p(\Lambda)}^p,
\end{equation}
yielding a contradiction. Therefore, the claim holds true.

Let now $\Lambda=[0,1]^d+j$ for some $j\in\tilde \Gamma$ be a good cube. 
Setting $r_i=j_i+1/2$ for all $i=1,\ldots,d$, we have 
\begin{equation} \label{eq:lambda-cube}
\Lambda=[0,1]^d+j = [r_1-1/2,r_1+1/2]\times\ldots\times[r_d-1/2,r_d+1/2].
\end{equation}
Since  $y_0\in \Lambda$, if $z\in D$ (see the text before inequality \eqref{eq:max_F} for the definition of the polydisc $D$), 
%$=[r_1-1/2,r_1+1/2]\times\ldots\times[r_d-1/2,r_d+1/2]$
then $\tilde z=y_0+ z \in D(r_1,4+1/2)\times\ldots\times D(r_d,4+1/2)=:\widetilde{D}$.
For  $x\in\Lambda$ as in \eqref{eq:claim}, we have that
$\widetilde{D}\subset D(x_1,5)\times\ldots\times D(x_d,5)$
and for any $\tilde z\in\widetilde{D}$, the series expansion of $f$ gives
\begin{equation}\label{eq:taylor}
\begin{split}
\abs{f(\tilde z)}\leq\sum_{\alpha\in\NN^d_0}\frac{\abs{\partial^\alpha f(x)}}{\alpha!}\abs{\tilde z-x}^\alpha
&\leq\sum_{\alpha\in\NN^d_0}{2^{(3d)/p}}(\widetilde{C}b)^{\alpha}5^{\abs{\alpha}}\frac{1}{\alpha!}\norm{f}{L^p(\Lambda)}\\
&={2^{(3d)/p}}\exp\left(5\widetilde{C} \abs{b}_1\right)\norm{f}{L^p(\Lambda)},
\end{split}
\end{equation}
where $\abs{b}_1= \sum_{j=1}^d b_j$, $\tilde{C}=9C_B$, and where we used \eqref{eq:claim} in the second inequality .

We are now in position to bound $M$ defined in \eqref{eq:max_F} associated with the good cube $\Lambda$. By \eqref{eq:taylor}
\begin{equation}\label{eq:M-bound}
M=
 \max_{z \in D}  \frac{\abs{f(y_0+z)}}{\norm{f}{L^p(\Lambda)}}
\leq\norm{f}{L^p(\Lambda)}^{-1}\max_{\tilde z\in\widetilde{D} }\abs{f(\tilde z)}\leq {2^{(3d)/p}}\exp\left(5\widetilde{C}\abs{b}_1\right),
\end{equation}
and consequently,
\begin{equation}\label{eq:M-bound-2}
\log M\leq\left(\frac{3d}{p}\right)\log 2+5\widetilde{C}\abs{b}_1.
\end{equation}

Substituting \eqref{eq:M-bound-2} into \eqref{eq:1} and summing over all good cubes $\Lambda$, we have
\begin{equation}\label{eq:semi-final}
\norm{f}{L^p(S\cap\TT_L^d)}
\geq\frac{1}{2}\left(\frac{\gamma}{C_3^d}\right)^{\frac{6d+1}{p}+C_4\abs{b}_1}\norm{f}{L^p(\TT_L^d)}, \quad C_4=90 \, C_B/\log 2,
\end{equation}
and this concludes the proof for $p\in[1,\infty), a=(1,\ldots,1)$ and $2\pi L_j\geq 1$ for all $j=1,\ldots, d$.

If $p=\infty$, the proof follows the same steps. However, in this case a cube $\Lambda$ is
\textsl{good} if           
\begin{equation}\label{eq:bad-infty}
\norm{\partial^{\alpha}f}{L^\infty(\Lambda)}< 2^{2d}(3C_B b)^{\alpha}\norm{f}{L^\infty(\Lambda)}\quad \forall\,\alpha\in\NN_0^d\setminus\{0\}
\end{equation}
and we have that $\norm{f}{L^\infty(\bigcup\limits_{\Lambda\text{ good}}\Lambda)}=\norm{f}{L^\infty(\TT_L^d)}$. Then, we conclude
\begin{equation}\label{eq:eq:17}
\norm{f}{L^\infty(S\cap\TT_L^d)}\geq\left(\frac{\gamma}{\tilde{C}_3^d}\right)^{6d+\tilde{C_4}\abs{b}_1}\norm{f}{L^\infty(\TT_L^d)},
\end{equation}
where $\tilde C_3$ and $\tilde C_4$ are numerical constants. 

Finally, we treat the general case by rescaling.

Let us assume that $L\in\RR_+^d$, the vector $a=(a_1,\ldots,a_d)$ has components $a_j\leq 2\pi L_j$ for all $j=1,\ldots,d$,
$S$ is a $(\gamma, a)$-thick set, and $f\in L^p(\TT_L^d)$ with $\supp\h{f}\subset[-b_1/2+\lambda_1,b_1/2+\lambda_1]\times\ldots\times[-b_d/2+\lambda_d,b_d/2+\lambda_d]$.

We define the transformation $T(x_1,\ldots,x_d)=(a_1x_1,\ldots,a_dx_d)$ for all $x\in\RR^d$.
In particular, $T(\TT^d_{L/a})=\TT_L^d$ and $F=T^{-1}(S)$ is $(\gamma, 1)$-thick.
Further, we consider the function $g=f\circ T:\TT^d_{L/a}\longrightarrow\CC$.
Then, it holds $\h{f}(\frac{k_1}{L_1},\ldots,\frac{k_d}{L_d})=\h{g}(\frac{a_1k_1}{L_1},\ldots, \frac{a_dk_d}{L_d})$, which implies
\begin{equation}\label{eq:support-scaling}
\supp\h{g}\subset \left[a_1\left(\lambda_1-\frac{b_1}{2}\right),a_1\left(\lambda_1+\frac{b_1}{2}\right)\right]\times\ldots\times\left[a_d\left(\lambda_d-\frac{b_d}{2}\right), a_d\left(\lambda_d+\frac{b_d}{2}\right)\right].
\end{equation}
Moreover, for $p<\infty$ we have
\begin{equation}\label{eq:p-norms}
\left(\prod_{j=1}^d a_j\right)\norm{g}{L^p(\TT^d_{L/a})}^p=\norm{f}{L^p(\TT_L^d)}^p,
\quad
\left(\prod_{j=1}^d a_j\right)\norm{g}{L^p(\TT^d_{L/a}\cap F)}^p=\norm{f}{L^p(\TT_L^d\cap S)}^p.
\end{equation}
Even simpler are the relations for $p=\infty$:
\begin{equation}
\norm{g}{L^\infty(\TT^d_{L/a})}=\norm{f}{L^\infty(\TT_L^d)},
\qquad
\norm{g}{L^\infty(\TT^d_{L/a}\cap F)}=\norm{f}{L^\infty(\TT_L^d\cap S)}.
\end{equation}

Therefore, applying \eqref{eq:semi-final} to $g$, $F$ and $\TT^d_{L/a}$ and using \eqref{eq:support-scaling}
and the relation between the $L^p$ norms of $f$ and $g$ we conclude
\begin{multline} 	\label{eq:eq:18}
\norm{f}{L^p(S\cap\TT_L^d)}^p=\left(\prod_{j=1}^d a_j\right)\norm{g}{L^p(\TT^d_{L/a}\cap F)}^p
\\
\geq\left(\prod_{j=1}^d a_j\right)\left(\frac{\gamma}{C_3^d}\right)^{6d+1+pC_4 a\cdot b}\norm{g}{L^p(\TT^d_{L/a})}^p
=\left(\frac{\gamma}{C_3^d}\right)^{6d+1+pC_4 a\cdot b}\norm{f}{L^p(\TT_L^d)}^p,
\end{multline}
and similarly for the $L^\infty$-norm case.

\begin{remark}
If each $2\pi L_j$ is an integer multiple of $a_j$, then the exponent in Theorem \ref{thm:1} becomes $c a\cdot b  + (4p+1)/p$.
This is due to the fact that the union of cubes with unit sidelengths covers the torus without overlapping.
Therefore, one can reduce the factors in \eqref{eq:bad} and \eqref{eq:claim} to $2^{d/p}$ and $2^{2d/p}$ respectively,
which leads to the stated exponent.
\end{remark}

\section{Proof of Theorem \ref{thm:2}} \label{sec:proofKLS}

The proof of Theorem \ref{thm:2} is divided into two cases. First we assume the
 $d$-dimensional rectangles $J_l$ to be disjoint,
then we recover the general case by an induction argument on the number of  $d$-dimensional rectangles.
The general strategy of the proof
resembles ideas from the proof of Theorem \ref{thm:1}.
In fact, we split $\TT_L^d$ into unit cubes as done before and we aim at obtaining a local estimate on each cube. Then, summing over all cubes
yields the result. We point out that in this case we do not classify the cubes into two types
as we need other estimates due to the nature of the function $f$, see \eqref{eq:function}.

Let the  $d$-dimensional rectangles $J_l$ be all disjoint.

In addition, we first assume that $p\in[1,\infty)$, $a=(1,\ldots,1)$, and $2\pi L_j\geq 1$ for all $j=1,\ldots,d$.

By Lemma \ref{lemma:sum-estimate} we write
\begin{equation}\label{eq:function}
f(x)=\sum_{l=1}^n f_l(x)e^{i c_l\cdot x},
\end{equation}
for some functions $f_l$ with $\supp\h{f_l}\subset [-b_1,b_1]\times\ldots\times[-b_d,b_d]$
and some points $c_l\in \left(\frac{1}{L_1}\ZZ\times\ldots\times\frac{1}{L_d}\ZZ\right)\cap J_l$.
We also have
\begin{equation}\label{eq:claim-2}
\norm{f_l}{L^p(\TT_L^d)}\leq K^d\norm{f}{L^p(\TT_L^d)}\qquad\forall\, l=1,\dots, n.
\end{equation}

As in the proof of Theorem \ref{thm:1}, we now cover the torus by unit cubes
\begin{equation*}
\TT_L^d\subset \bigcup_{j\in \tilde\Gamma}\left([0,1]^d+j\right),\quad \tilde\Gamma=\ZZ^d\cap ([0,\lceil 2\pi L_1\rceil -1]\times\ldots\times[0,\lceil 2\pi L_d\rceil -1]),
\end{equation*}
which yields $\sum_{j\in \tilde\Gamma}\norm{f}{L^p([0,1]^d+j)}^p\leq 2^d\norm{f}{L^p(\TT_L^d)}^p$.
Again, we denote by $\Lambda$ any cube of the form $[0,1]^d+j$.

Let $y_0\in \Lambda$ be such that $f(y_0)=\max_{x\in\Lambda}\abs{f(x)}$.
By the same probabilistic argument in the proof of Theorem \ref{thm:1},
there exists a line segment $I_0=I(\Lambda, S,y_0,\eta)=\{x\in \Lambda\;\vert\; x=y_0+r\eta, r\geq 0 \}\subset \Lambda$
such that $y_0\in I_0$ and
$\frac{\abs{S\cap I_0}}{\abs{I_0}}\geq\frac{\abs{\Lambda\cap S}}{C_1^d}\geq\frac{\gamma}{C_1^d}$.

We now define $G_\eta:\RR\longrightarrow\CC$ as
\begin{equation}\label{eq:eq:31}
  G_\eta(t)=f(y_0+t\abs{I_0}\eta)=\sum_{l=1}^n f_l(y_0+t\abs{I_0}\eta)e^{i c_l\cdot(y_0+t\abs{I_0}\eta)},
\end{equation}
and applying Taylor expansion with remainder term in integral form we obtain
\begin{align}\nonumber 
G_\eta(t)&=\sum_{l=1}^n p_l(t)e^{i c_l\cdot(y_0+t\abs{I_0}\eta)}
+\sum_{l=1}^n\frac{e^{i c_l\cdot(y_0+t\abs{I_0}\eta)}}{(m-1)!}\int_0^t(t-s)^{m-1}\frac{d^{m}}{ds^m}f_l(y_0+s\abs{I_0}\eta)\dd s\\
&=P_\eta(t)+r_\eta(t),
\label{eq:eq:19}
\end{align}
where the $p_l(t)$'s are polynomials of degree at most $m-1$.

Then, Theorem \ref{lemma:3} applied to $P_\eta$, $[0,1]$, and $A=\{t\in[0,1]\;\vert\;y_0+t\abs{I_0}\eta\in S\cap I\}$, whose measure is
$\abs{A}=\frac{\abs{S\cap I_0}}{\abs{I_0}}\geq\frac{\abs{S\cap\Lambda}}{C_1^d}$, yields
\begin{equation}\label{eq:eq:22a}
\begin{split}
\norm{f}{L^\infty(\Lambda)}& 
=\norm{G_\eta}{L^{\infty}([0,1])}
\leq\norm{P_\eta}{L^\infty([0,1])}+\norm{r_\eta}{L^\infty([0,1])}
\\ &
\leq\Big(\frac{\tilde{C_3}}{\abs{A}}\Big)^{nm-1}\norm{P_\eta}{L^\infty(A)}+\norm{r_\eta}{L^\infty([0,1])}\\
\end{split}
\end{equation}
Using the estimate on $| A |$, we see that the right hand side of \eqref{eq:eq:22a} is bounded above by
\begin{multline}\label{eq:eq:22b}
\Big(\frac{\tilde{C_3}C_1^d}{\abs{S\cap\Lambda}}\Big)^{nm-1}\norm{P_\eta}{L^\infty(A)}
+\norm{r_\eta}{L^\infty([0,1])}\\
\leq\Big(\frac{\tilde{C_3}C_1^d}{\abs{S\cap\Lambda}}\Big)^{nm-1}\norm{G_\eta}{L^\infty(A)}+\left[\left(\frac{\tilde{C_3}C_1^d}{\abs{S\cap\Lambda}}\right)^{nm-1}+1\right]\norm{r_\eta}{L^\infty([0,1])}\\
\leq\left(\frac{\tilde{C_4}^d}{\abs{S\cap\Lambda}}\right)^{nm-1}\norm{f}{L^\infty(\Lambda\cap S)}+\left(\frac{\tilde{C_4}^d}{\abs{S\cap\Lambda}}\right)^{nm-1}\norm{r_\eta}{L^\infty([0,1])}.
\end{multline}

We now estimate the $L^\infty$-norm of the remainder term $r_\eta$ and show that the bound depends on the cube $\Lambda$ but not
on $\eta$. 
\begin{multline}\label{eq:eq:20a}
\max_{t\in[0,1]}\abs{r_\eta(t)}
\leq\max_{t\in[0,1]}\sum_{l=1}^n\frac{1}{(m-1)!}
\int_0^t\left\vert(t-s)^{m-1}\frac{d^{m}}{ds^m}f_l(y_0+s\abs{I_0}\eta)\right\vert\dd s\\
\leq\max_{t\in[0,1]}\sum_{l=1}^n\frac{1}{(m-1)!}
\max_{\tau\in[0,1]}\left\vert\frac{d^{m}}{d\tau^m}f_l(y_0+\tau\abs{I_0}\eta)\right\vert
\cdot  \int_0^t (t-s)^{m-1}\dd s  \\
\leq\frac{1}{m!}\sum_{l=1}^n\max_{\tau\in[0,1]}\Big\vert\frac{m!}{\alpha!}\mathop{\sum_{\alpha\in\NN^d_0,}}_{\abs{\alpha}=m}\partial^\alpha f_l(y_0+\tau\abs{I_0}\eta)(\abs{I_0}\eta)^{\alpha}\Big\vert\\
\end{multline}
Since $\abs{\abs{I_0}\eta_i}\leq 1$ for all $i=1,\ldots, d$ (see also the estimates after \eqref{eq:max_F} in Section \ref{s:proofLS}) we obtain
$\abs{(\abs{I_0}\eta)^\alpha}\leq 1$. Hence the right hand term in \eqref{eq:eq:20a} is bounded above by
\begin{equation}\label{eq:eq:20b}
\frac{1}{m!}\sum_{l=1}^n\mathop{\sum_{\alpha\in\NN^d_0,}}_{\abs{\alpha}=m}\frac{m!}{\alpha!}\max_{x\in \Lambda}\abs{\partial^\alpha f_l(x)}
\leq \frac{1}{m!}\sum_{l=1}^n\mathop{\sum_{\alpha\in\NN^d_0,}}_{\abs{\alpha}=m}\frac{m!}{\alpha !}\mathop{\sum_{\beta\in\NN_0^d,}}_{ \beta_j\in\{0,1\}}\norm{\partial^{(\alpha+\beta)}f_l}{L^1(\Lambda)}=: M_\Lambda,
\end{equation}
where we applied  the estimate
$\abs{\psi(x)}\leq\sum\limits_{\beta\in\NN_0^d,\beta_j\in\{0,1\}}\int_\Lambda\abs{\partial^\beta \psi}$ to the function $\psi=\partial^\alpha f_l$.

From Ineq. \eqref{eq:eq:22a}--\eqref{eq:eq:20b}, it follows
\begin{equation}\label{eq:eq:double}
\norm{f}{L^\infty(\Lambda)}\leq\left(\frac{\tilde{C_4}^d}{\abs{S\cap\Lambda}}\right)^{nm-1}\left(\norm{f}{L^\infty(\Lambda\cap S)}+M_\Lambda\right).
\end{equation}

Similarly, for the set 
\[
W:=\{x\in\Lambda\;\vert\;\abs{f(x)} + M_\Lambda <\left(\frac{\abs{S\cap\Lambda}}{\tilde{C_4}^d+1}\right)^{nm-1}\norm{f}{L^p(\Lambda)}\},
\]
there exists a (possibly different) direction $\tilde{\eta}=\eta(W)$ and (possibly different) 
line segment $I=I(\Lambda,W, y_0, \tilde\eta)$ satisfying $y_0\in I$ and
$\frac{\abs{W\cap I}}{\abs{I}}\geq\frac{\abs{\Lambda\cap W}}{C_1^d}=\frac{\abs{W}}{C_1^d}$.
Then, by using the   function $\tilde G_{\tilde{\eta}}\colon \RR \rightarrow \CC$,
$\tilde G_{\tilde\eta}(t)=f(y_0+t\abs{I}\tilde\eta)$,
analogous to \eqref{eq:eq:31}, 
and repeating the steps \eqref{eq:eq:19}--\eqref{eq:eq:20b}, 
 inequality \eqref{eq:eq:double} holds with $W$ instead of $S\cap \Lambda$.
We remark once more that the estimate of the $L^\infty$-norm of $r_{\tilde\eta}$ remains the same:
The function $\tilde G_{\tilde\eta}$, and consequently $\tilde P_{\tilde\eta}$ and $\tilde r_{\tilde\eta}$, depend on the choice of $\tilde{\eta}\in\mathbb{S}^{d-1}$, but the $L^\infty$ estimate does not,
as we derive it using properties that any line segment contained in $\Lambda$ has.
Now, Lemma \ref{lemma:level-set-argument} applied with $Q=M_\Lambda$, $\alpha=nm-1$, $q=p$,
and $U= S\cap\Lambda$ yields
\begin{equation}\label{eq:eq:23}
\begin{split}
\norm{f}{L^p(\Lambda)}^p&\leq \left(\frac{1+\tilde{C_4}^d}{\abs{\Lambda\cap S}}\right)^{pnm-(p-1)}\norm{\abs{f}+M_\Lambda}{L^p(\Lambda\cap S)}^p\\
&\leq\left(\frac{C_5^d}{\gamma}\right)^{pnm-(p-1)}\left(2^{p-1}\norm{f}{L^p(\Lambda\cap S)}^p+2^{p-1}\abs{\Lambda\cap S }M_\Lambda^p\right)\\
&\leq 2^{p-1}\left(\frac{C_5^d}{\gamma}\right)^{pnm-(p-1)}\norm{f}{L^p(\Lambda\cap S)}^p+ 2^{p-1}\left(\frac{C_5^d}{\gamma}\right)^{pnm}M_\Lambda^p,\\
\end{split}
\end{equation}
where we use $\abs{\Lambda\cap S}\leq 1$ in the last line.

Summing over all cubes $\Lambda$ we obtain
\begin{equation}\label{eq:eq:sum2}
\int_{\TT_L^d}\abs{f}^p\leq
2^{d}2^{p-1}\Big(\frac{C_5^d}{\gamma}\Big)^{pnm-(p-1)}\int_{\TT_L^d\cap S}\abs{f}^p
+2^{p-1}\Big(\frac{C_5^d}{\gamma}\Big)^{pnm}\sum_{\Lambda\text{ cubes}} M_\Lambda^p,
\end{equation}
and to conclude the proof we are left with estimating the sum on the right hand side.

We first estimate $M_\Lambda^p$ applying H\"older's Inequality to the $L^1$-norm of $\partial^{(\alpha+\beta)} f$
and to the three sums on the right side of \eqref{eq:eq:20b}. We obtain
\begin{equation}\label{eq:eq:21}
\begin{split}
M_\Lambda^p & \leq\frac{(2^d (m+1)^{d-1} n)^{p-1}}{(m!)^p}\sum_{l=1}^n\mathop{\sum_{\alpha\in\NN^d_0,}}_{\abs{\alpha}=m}\mathop{\sum_{\beta\in\NN_0^d,}}_{ \beta_j\{0,1\}}\Big(\frac{m!}{\alpha!}\Big)^p\norm{\partial^{(\alpha+\beta)}f_l}{L^p(\Lambda)}^p\\
& \leq \frac{(4^d m^d n)^{p-1}}{(m!)^p}\sum_{l=1}^n\mathop{\sum_{\alpha\in\NN^d_0,}}_{\abs{\alpha}=m}\mathop{\sum_{\beta\in\NN_0^d,}}_{\beta_j\{0,1\}}\Big(\frac{m!}{\alpha!}\Big)^p\norm{\partial^{(\alpha+\beta)}f_l}{L^p(\Lambda)}^p.
\end{split}
\end{equation}

We now take the sum over all cubes $\Lambda$ and use the Bernstein Inequality \eqref{eq:bernstein} to get	
\begin{equation}\label{eq:rest}
\begin{split}
&\sum_{\Lambda\text{ cubes}} M_\Lambda^p
\leq\frac{(4^d m^d n)^{p-1}}{(m!)^p}\sum_{l=1}^n\mathop{\sum_{\alpha\in\NN^d_0,}}_{\abs{\alpha}=m}\mathop{\sum_{\beta\in\NN_0^d,}}_{\beta_j\{0,1\}}\Big(\frac{m!}{\alpha!}\Big)^p2^{d}\int_{\TT_L^d}\abs{\partial^{(\alpha+\beta)}f_l}^p\\
&\leq\frac{(4^d m^d n)^{p-1}2^{d}}{(m!)^p}\sum_{l=1}^n\mathop{\sum_{\alpha\in\NN^d_0,}}_{\abs{\alpha}=m}\mathop{\sum_{\beta\in\NN_0^d,}}_{\beta_j\in\{0,1\}}\Big(\frac{m!}{\alpha!}\Big)^p(C_B b)^{(\alpha+\beta)p}\int_{\TT_L^d}\abs{f_l}^p\\
&\leq\frac{(4^{d} m^{d} n)^{p-1}2^{d}}{(m!)^p}\Bigg(\mathop{\sum_{\beta\in\NN^d_0,}}_{\beta_j\in\{0,1\}}\mathop{\sum_{\alpha\in\NN^d_0,}}_{\abs{\alpha}=m}\frac{(m!)(C_B b)^{(\alpha+\beta)}}{\alpha! \beta!}\Bigg)^p \sum_{l=1}^n\int_{\TT_L^d}\abs{f_l}^p.
\end{split}
\end{equation}

Using \eqref{eq:claim-2} on each $f_l$, the equality
\[
\sum_{\alpha\in\NN^d_0,\abs{\alpha}=m}\frac{(m!)(C_B b)^{\alpha}}{\alpha! } = (C_B \abs{b}_1)^m,
\]
which follows by an induction argument, and noting that the $\beta$-sum is the truncated power series
expansion of $e^{C_B\abs{b}_1}$, we conclude
\begin{equation}\label{eq:32}
\begin{split}
\sum_{\Lambda\text{ cubes}}& M_\Lambda^p\leq \frac{(4^d m^{d}n)^{p-1}2^{d}nK^{dp}}{(m!)^p}
\left(C_B\abs{b}_1\right)^{mp}e^{pC_B\abs{b}_1}\int_{\TT_L^d}\abs{f}^p\\
&\leq \frac{(8mnK)^{dp}(C_B\abs{b}_1)^{mp}e^{pC_B\abs{b}_1}}{(m!)^p}\int_{\TT_L^d}\abs{f}^p.
\end{split}
\end{equation}

Substituting the above inequality into \eqref{eq:eq:sum2} yields
\begin{align}
\nonumber
\int_{\TT_L^d}\abs{f}^p&\leq\Big(\frac{C_6^d}{\gamma}\Big)^{pnm-(p-1)}\int_{\TT_L^d\cap S}\abs{f}^p
\\ \label{eq:eq:24}
&\text{\hspace{20mm}}	+2^{p-1}\frac{(8mnK)^{dp}(C_B\abs{b}_1)^{mp}
e^{p C_B\abs{b}_1}}{(m!)^p}\Big(\frac{C_5^d}{\gamma}\Big)^{pnm}\int_{\TT_L^d}\abs{f}^p
\\ \nonumber
&\leq\Big(\frac{C_6^d}{\gamma}\Big)^{pnm-(p-1)}\int_{\TT_L^d\cap S}\abs{f}^p+
\frac{(\abs{b}_1)^{mp} e^{pC_B \abs{b}_1}}{{m^{mp}}}\Big(\frac{C_7^d}{\gamma}\Big)^{pnm}\int_{\TT_L^d}\abs{f}^p,
\end{align}
where the last inequality follows from Stirling's formula and the fact that $t\leq 2^t$.

Now, we choose $m$ such that
\[
\frac{(\abs{b}_1)^{mp} e^{pC_B \abs{b}_1}}{{m^{mp}}} \Big(\frac{C_7^d}{\gamma}\Big)^{pnm}
\leq \frac{1}{2}.
\]
For example, the choice
\begin{equation}\label{eq:m-choice}
m=\lceil 2 \, e \, C_B\left(\frac{C_7^d}{\gamma} \right)^n\abs{b}_1\rceil \leq 1+\left(\frac{C_8^d}{\gamma} \right)^n\abs{b}_1
\end{equation}
with $C_8=2\, e \, C_B\,  C_7 \geq 3$ fulfils the condition.

Then, we conclude
\begin{equation}\label{eq:eq:25}
\int_{\TT_L^d}\abs{f}^p
\leq
2 \left(\frac{C_6^d}{\gamma}\right)^{p\left(\frac{C_{9}^d}{\gamma}\right)^n\abs{b}_1+pn-(p-1)}\int_{\TT_L^d\cap S}\abs{f}^p.
\end{equation}

If $p=\infty$ the proof follows the same steps with obvious modifications (i.e. no passage to $L^p$-norm and
taking the maximum over all $\Lambda$ instead of summing over them).

In the general case when $L\in\RR^d_+$, the vector $a=(a_1,\ldots,a_d)$ has components $a_j\leq 2\pi L_j$ for all $j=1,\ldots,d$,
and the $J_l$ are disjoint, the same scaling argument as in Section \ref{s:proofLS} yields
\begin{equation}\label{eq:final}
\int_{\TT_L^d}\abs{f}^p
\leq\left(\frac{C_6^d}{\gamma}\right)^{p\left(\frac{C_{9}^d}{\gamma}\right)^n a\cdot b +pn-(p-1)}\int_{\TT_L^d\cap S}\abs{f}^p.
\end{equation}

To conclude the proof, consider the case when the $d$-dimensional rectangles $J_l$ are not disjoint.
We proceed by induction on the number of  $d$-dimensional rectangles  $n$.
If $n=1$ the result is true by either Theorem \ref{thm:1} or equation \eqref{eq:final}.

Let us assume that \eqref{eq:md-LogSer-torus-2} is true for $n\leq N$ and let us consider the case when $n=N+1$.

If the $d$-dimensional rectangles $J_1, \ldots, J_{N+1}$ are all disjoint, 
then the theorem is true by equation \eqref{eq:final}.

If among $J_1, \ldots, J_{N+1}$ there is a pair of $d$-dimensional rectangles which are not disjoint, 
it is possible to cover the $N+1$ $d$-dimensional rectangles of sidelengths $b_1,\ldots, b_d$
 with $N$  $d$-dimensional rectangles of sidelengths $3b_1,\ldots, 3b_d$.
Consequently
\begin{align}\label{eq:eq:26}
\norm{f}{L^p(\TT_L^d)}&
\leq\Big(\frac{\tilde{c}^d}{\gamma}\Big)^{\big(\frac{\tilde{c}^d}{\gamma}\big)^N 3  a \cdot b
+N-\frac{(p-1)}{p}}\norm{f}{L^p(S\cap\TT_L^d)}\notag\\
&\leq \Big(\frac{\tilde{c}^d}{\gamma}\Big)^{\big(\frac{\tilde{c}^d}{\gamma}\big)^{N+1} a \cdot b
+N+1-\frac{(p-1)}{p}}\norm{f}{L^p(S\cap\TT_L^d)},
\end{align}
where the first inequality follows by induction hypothesis and the second one is due to the fact that
$3\leq \tilde{c}\leq \frac{\tilde{c}^d}{\gamma}$. Hence, the proof is completed.

\begin{remark}
The quantities $C_B, K, C_1, C_3,C_4,C_5,C_6,C_7,C_8,C_9,
\tilde{C_3},\tilde{C_4}$, and $ \tilde{c}$ appearing in this section are
all numerical constants.
\end{remark}

\section{Proof of Theorem \ref{thm:sufficiency-necessity} and Remark \ref{rmk:sufficiency-necessity}} \label{sec:proof-T-2.3}

We recall that in the following $f$ is identified with its periodic extension on $\RR^d$.

$(i)\Rightarrow (ii)$ and $(ii')$: Let $S$ be a thick set with parameters $\gamma$ and $a$ and set $L_0:=\frac{1}{2\pi}\max_{j=1,\ldots, d} a_j>0$.
Let now $f$ be a periodic function with periodicity cell $[0,2\pi L_1]\times\ldots\times[0,2\pi L_d]$ with $\min_{j=1,\ldots,d} L_j\geq L_0$
and such that $\supp\widehat{f}\subset J$ (or $\supp\widehat{f}\subset J_1 \cup \ldots \cup J_n$, respectively).
Let now $h\in\RR^d$ and define $f_h:= f(\cdot -h)$. Then $\chi_{\TT_L^d} f_{h}$ is in $L^p(\TT_L^d)$
and its Fourier coefficients are again contained in $J$ (or in $J_1 \cup \ldots \cup J_n$, respectively).
Moreover, if $S$ is thick, so is $S+h$.
By change of variables and applying Theorem \ref{thm:1} (or Theorem \ref{thm:2} respectively) to $f_h$ and $S+h$ we obtain
\begin{align*}
 \norm{f}{L^p(\TT_L^d-h)}=\norm{f_{h}}{L^p(\TT_L^d)}
 \leq
 C\norm{f_{h}}{L^p((S+h)\cap\TT_L^d)}
 =
 C\norm{f}{L^p(S\cap(\TT_L^d-h))},
 \end{align*}
 where $C=C(\gamma, a\cdot b, d)$ is the constant in Theorem \ref{thm:1} (or Theorem \ref{thm:2} respectively).

 \medskip

 $(ii)\Rightarrow (i)$: Let $L_0>0$ be as in $(ii)$. It is sufficient to consider the case $L_1=\ldots=L_d=L_0$.
 Let $f\in L^p(\TT_L^d)$ be a periodic function with periodicity cell $\TT_L^d=([0,2\pi L_0])^d$
 and such that $\supp\widehat{f}\subset J$.
 Since for any $h\in \RR^d$, the sets $\TT_L^d$ and $\TT_L^d-h$ are periodicity cells of the same lattice, it holds $\norm{f}{L^p(\TT_L^d)}=\norm{f}{L^p(\TT_L^d -h)}$.

Using equation \eqref{eq:extended-LS-torus-2}, and Lemma \ref{lem:L-infinity-L-p-estimate} we obtain
\begin{align*}
 \norm{f}{L^p(\TT_L^d )}^p
 &=\norm{f}{L^p(\TT_L^d -h)}^p
    \leq C^p\int_{S\cap (\TT_L^d -h)}\abs{f(x)}^p \dd x \\
 & \leq C^p\abs{S\cap (\TT_L^d -h)}\sup_{x\in (\TT_L^d -h)} \abs{f(x)^p}
   = C^p\abs{S\cap (\TT_L^d -h)}(\sup_{y\in \TT_L^d } \abs{f(y)})^p\\
 & \leq C^p\abs{S\cap (\TT_L^d -h)} (2\pi)^{d(p-1)} \prod_{j=1}^d \frac{(2L_0 b_j +1)^{p}}{L_0}   \norm{f}{L^p(\TT_L^d)}^p\\
\end{align*}
This implies that for all $h\in\RR^d$
\begin{equation*}
 \abs{S\cap (\TT_L^d -h)} \geq \left(C^p  (2\pi)^{d(p-1)} \prod_{j=1}^d \frac{(2L_0 b_j +1)^{p}}{L_0}   \right)^{-1}=:\gamma>0,
\end{equation*}
where the constant $\gamma$ on the right hand side does not depend on $h$. Therefore, $S$ is a $(\gamma,a)$-\emph{thick}  set, where $a_1=\ldots=a_d=2\pi L_0$.

\section{Proof of Theorem \ref{thm:small-energy-interval}}
\label{s:nontrivial-V}

We will use here results of several papers which study observability for the time-dependent Schr\"odinger equation on $L^2(\TT_L^d)$
in the case that the `observability set' $S\neq \emptyset$ is an arbitrary open subset of $\TT_L^d$.
This has been studied for sufficiently regular $L^\infty(\TT^d)$-potentials on the standard size torus $\TT^d$ in any
dimension by Anantharaman and Macia in \cite{AnantharamanM-14}, for $C^\infty(\TT_L^2)$-potentials in dimension $d=2$
by Burq and Zworski in \cite{BurqZ-12}, and for $L^2(\TT_L^d)$-potentials ($d\in\{1,2\}$)
by Bourgain, Burq and Zworski in \cite{BourgainBZ-13},
which was again extended to dimension $d=3$ by Bourgain in \cite{Bourgain-14}.
We spell out some of their results which are of direct interest to this section.

\begin{theorem}[Anantharaman, Macia \cite{AnantharamanM-14}]\label{thm:AM}
Assume that $V\colon \TT^d\to \RR$ is in $L^\infty(\TT^d)$ and that its set of discontinuities  has measure zero.
Then, for every  open set $\emptyset \neq S\subset\TT^d$ and every $T>0$ there exists a constant ${K}={K}(V,T,S)>0$ such that
\begin{equation} \label{eq:observability-unitary-evolution}
\norm{f}{L^2(\TT^d)}^2\leq {K}\int_0^T\norm{e^{-t i H }f}{L^2(S)}^2\dd t,
\text{ for every  $f\in L^2(\TT^d)$.}
\end{equation}
Here  $H=-\Delta+V$ denotes the Schr\"odinger operator on $\TT^d$.
\end{theorem}
\begin{theorem}[Bourgain, Burq, Zworski \cite{BourgainBZ-13}, Bourgain \cite{Bourgain-14}] \label{thm:BBZ}
Let $d\in \{1,2,3\}$, $V\in L^2(\TT^d)$, and  $H=-\Delta+V$.
Then, for every  open set $\emptyset \neq S\subset\TT^d$ and every $T>0$ there exists a constant ${K}={K}(V,T,S)>0$ such that
inequality \eqref{eq:observability-unitary-evolution} holds.
\end{theorem}

If $f\in L^2(\TT^d)$ is an eigenfunction, i.e.~$-\Delta f+Vf= \lambda f $ for some real $\lambda \in \RR$,
then we have $e^{-it H}f= e^{-it \lambda}f$ and so inequality \eqref{eq:observability-unitary-evolution} yields
\begin{align}\label{eq:eq:2}
\norm{f}{L^2(\TT^d)}^2\leq {K}\int_0^T\norm{e^{-t i \lambda }f}{L^2(S)}^2\dd t
= {K}\int_0^T\norm{f}{L^2(S)}^2\dd t={K}T  \norm{f}{L^2(S)}^2,
\end{align}
which gives an instance of the bound \eqref{eq:question-E-independent-UCP} in the case of $f$ being a pure eigenfunction.
Note that the parameter $T$ is independent of the eigenvalue problem $Hf=\lambda f$, so one could optimize ${K}(V,T,S)\cdot T$
over $T>0$.

For linear combinations of eigenfunctions this bound can be strengthened somewhat, as was pointed out to us by N.~Anantharaman \cite{Anantharaman-16}
and independently by an anonymous referee. Indeed, it is possible to positively answer Question \ref{question} if the energy length $w$
is sufficiently small.
The proof of this statement is inspired by an argument in Section 2 of \cite{RamdaniTTT-05} and uses the following lemma,
which we state for the operator $H$ and whose general version and proof can be found in \cite[Lemma 5.2]{Miller-05c}.

\begin{lemma}\label{lemma:miller}
 Let $T>0$ and let $S\neq \emptyset $ be a open subset of $\TT^d$.
 Assume that there exists a constant $K=K(S,T,V)$ such that
\eqref{eq:observability-unitary-evolution} holds.
 Then,
\begin{equation}\label{eq:miller-2}
 \norm{f}{L^2(\TT^d)}^2
 \leq KT^3 \norm{(iH -i\lambda)f}{L^2(\TT^d)}^2
 + 2KT \norm{f}{L^2(S)}^2 \quad \forall \, f\in \mathcal{D}(H), \, \forall \, \lambda\in\RR.
\end{equation}
\end{lemma}

\begin{proposition}\label{prop:high-energy-interval}
Let $T>0$, $\emptyset \neq S\subset\TT^d$ be an open  subset.
Assume that $V$ is a potential and $K=K(S,T,V)$ a constant such that
\eqref{eq:observability-unitary-evolution} holds.
Let $w:=(2K T^3)^{-1/2}$.
Then, for all $f\in \bigcup_{E\in \RR} \Ran(\chi_{[E-w,E]}(H))$ we have
\begin{equation*}
\norm{f}{L^2(\TT^d)}^2 \leq 4KT \norm{f}{L^2(S)}^2.
\end{equation*}
\end{proposition}

\begin{proof}
Let $E\in\RR$ and let $f\in L^2(\TT^d)$ be a linear combination of normalized eigenfunctions $\phi_k$ of $H$ relative to eigenvalues in
$J_{E}:=[E-w, E]$, that is, $f(x)=\sum_{E_k\in J_{E}}\alpha_k\phi_k(x)$.
For such $f$ we have
\begin{align*}
 \norm{(iH -i E)f}{L^2(\TT^d)}^2 & = \norm{\sum_{E_k\in J_{E}}i(E_k-E)\alpha_k\phi_k}{L^2(\TT^d)}^2 \\
& = \sum_{E_k\in J_{E}}\abs{(E_k-E)\alpha_k}^2
\leq w^2\norm{f}{L^2(\TT^d)}^2
= (2K T^3)^{-1}\norm{f}{L^2(\TT^d)}^2.
\end{align*}
Lemma \ref{lemma:miller} yields
\begin{multline}\label{eq:step-1}
 \norm{f}{L^2(\TT^d)}^2
 \leq K T^3 \norm{(iH -i\lambda)f}{L^2(\TT^d)}^2 + 2TK \norm{f}{L^2(S)}^2\\
 \leq \frac{\norm{f}{L^2(\TT^d)}^2 }{2}  + 2TK \norm{f}{L^2(S)}^2
 \end{multline}
which implies the claim.
\end{proof}

\begin{proof}[Proof of Theorem \ref{thm:small-energy-interval}]
By the hypothesis on the potential and Theorem \ref{thm:AM} or \ref{thm:BBZ}
inequality \eqref{eq:observability-unitary-evolution} holds
with some constant ${K}={K}(T,S,V)$.
Let $w_T:=(2K T^3)^{-1/2}$ as above and $\kappa_T(S,V):=4 {K}(T,S,V) T$.
Pick $f\in \bigcup_{E\in\RR} \Ran(\chi_{[E-w, E]}(H))$.
Then by Proposition \ref{prop:high-energy-interval}
\begin{equation*}
\norm{f}{L^2(\TT^d)}^2
\leq \kappa_T \norm{f}{L^2(S)}^2.
\end{equation*}
Fixing a value for $T>0$ gives now the result. In specific applications the choice of $T$ can be used to optimize the constants $w_T$ and/or $\kappa_T$.
\end{proof}

\subsubsection*{Acknowledgements}
This work has been partially supported by the DFG under grant
VE 253/7-1 \emph{Multiscale version of the Logvinenko-Sereda Theorem}.
Part of this work was done while the authors were visiting the Hausdorff Research Institute for Mathematics
during the Trimester Program \emph{Mathematics of Signal Processing}.
The authors would like to thank Nalini Anantharaman and Angkana R\"uland for stimulating discussions and interest in this project,
Matthias T\"aufer and Albrecht Seelmann for commenting an earlier version of this manuscript and for clarifying discussions,
the anonymous referees whose remarks have stimulated several additions to the paper, and the editor for the handling of the paper.

\def\polhk#1{\setbox0=\hbox{#1}{\ooalign{\hidewidth
  \lower1.5ex\hbox{`}\hidewidth\crcr\unhbox0}}}

\end{document}